\documentclass[12pt,a4paper,reqno]{amsart}
\usepackage[utf8]{inputenc}
\usepackage[T1]{fontenc}
\usepackage{amsmath}
\usepackage{amsthm}
\usepackage{amssymb}
\usepackage[abbrev]{amsrefs}
\usepackage{mathrsfs}
\usepackage[dvipsnames]{xcolor}
\usepackage{bm}
\usepackage{bbm}
\usepackage{hyperref}
\usepackage{mathtools}
\usepackage[]{enumitem}
\newlist{FNenumerate}{enumerate}{1}
\setlist*[FNenumerate]{label=\textbf{Action\ \arabic*}, resume=FN}
\AtBeginDocument{\def\MR#1{}}
\makeatletter
\@namedef{subjclassname@2020}{%
\textup{2020} Mathematics Subject Classification}
\makeatother
\newtheorem{Claim}{}

\newtheorem{claim}[Claim]{Claim}
\newtheorem{thm}{}[section]
\newtheorem{theorem}[thm]{Theorem}
\newtheorem{corollary}[thm]{Corollary}
\newtheorem{lemma}[thm]{Lemma}

\newtheorem{proposition}[thm]{Proposition}

\theoremstyle{definition}

\numberwithin{equation}{section}
\allowdisplaybreaks
\newcommand{\littletaller}{\mathchoice{\vphantom{\big|}}{}{}{}}
\newcommand\restr[2]{{\left.\kern-\nulldelimiterspace #1 \littletaller \right|_{#2}}}
\newcommand{\Nnorm}[1]{{\left\vert\kern-0.25ex\left\vert\kern-0.25ex\left\vert #1\right\vert\kern-0.25ex\right\vert\kern-0.25ex\right\vert}}
\newcommand{\abs}[1]{\left\lvert#1\right\rvert}
\newcommand{\norm}[1]{\left\lVert#1\right\rVert}
\newcommand{\enbrace}[1]{\left\lbrace#1\right\rbrace}
\newcommand{\enpar}[1]{\left(#1\right)}

\newcommand{\enbrak}[1]{\left[#1\right]}
\newcommand{\Id}{ {\mathrm{Id}}}
\newcommand{\Fou}{ {\mathcal{F}}}
\newcommand{\St}{ {\mathcal{S}}}

\newcommand{\Gt}{ {\mathcal{G}}}

\newcommand{\Dt}{ {\mathcal{D}}}
\newcommand{\EB}{ {\mathcal{E}}}
\newcommand{\YB}{ {\mathcal{Y}}}
\newcommand{\XB}{ {\mathcal{X}}}
\newcommand{\It}{ {\mathcal{I}}}
\newcommand{\Ot}{ {\mathcal{O}}}
\newcommand{\Pt}{ {\mathcal{P}}}

\newcommand{\Nt}{ {\mathcal{N}}}

\newcommand{\Mt}{ {\mathcal{M}}}
\newcommand{\Sym}{ {\mathbb{S}}}
\newcommand{\FF}{ {\mathbb{F}}}

\newcommand{\XX}{ {\mathbb{X}}}
\newcommand{\YY}{ {\mathbb{Y}}}

\newcommand{\NN}{ {\mathbb{N}}}

\newcommand{\cs}{ {\bm{s}}}

\newcommand{\dd}{ {\bm{d}}}
\newcommand{\ee}{ {\bm{e}}}
\newcommand{\xx}{ {\bm{x}}}
\newcommand{\yy}{ {\bm{y}}}

\DeclareMathOperator{\spn}{span}

\DeclareMathOperator\supp{supp}
\newcommand{\renorm}[1]{{\left\vert\kern-0.25ex\left\vert\kern-0.25ex\left\vert #1 
    \right\vert\kern-0.25ex\right\vert\kern-0.25ex\right\vert}}
\hyphenation{Ba-nach}
\hyphenation{Ma-drid}
\hyphenation{Schau-der}
\hyphenation{o-pe-ra-tors}
\hyphenation{e-qui-va-lent}
\hyphenation{Mar-cin-kie-wicz}
\hyphenation{Es-ta-dis-ti-ca}
\hyphenation{non-sum-mable}
\subjclass[2020]{46B15, 46B03, 46A35, 46A16}
\keywords{Symmetric bases, subsymmetric bases, spreading models, conditional bases, quasi-Banach spaces}
\begin{document}
\title[Symmetry-type conditions of conditional bases]{Remarks about symmetry-type conditions of conditional bases of Banach spaces}
\author[Ansorena]{Jos\'e L. Ansorena}
\address{Department of Mathematics and Computer Sciences\\
Universidad de La Rioja\\
Logro\~no\\
26004 Spain}
\email{joseluis.ansorena@unirioja.es}
\author[Marcos]{Alejandro Marcos}
\address{Department of Mathematics and Computer Sciences\\
Universidad de La Rioja\\
Logro\~no\\
26004 Spain}
\email{alejandro.marcos@unirioja.es}
\begin{abstract}
We investigate the existence of equivalent $p$-norms, $0\le p \le 1$, under which conditional symmetric or spreading bases in quasi-Banach spaces become isometric. For spreading bases (which need not be unconditional or even Schauder bases), we develop new techniques involving the geometry of spreading sequences and their associated spreading models. We prove that any spreading basis is automatically seminormalized, M-bounded, and uniformly spreading, which allows the construction of an isometric renorming via its spreading model. For symmetric bases, we show they are necessarily spreading and uniformly symmetric, enabling a direct application of a renorming lemma for uniformly bounded semigroups of operators. Consequently, any quasi-Banach space with a symmetric basis admits a renorming making all permutations isometries, and any spreading basis admits a renorming making all increasing maps isometries. These results extend and unify classical isometric renorming theorems for unconditional, subsymmetric, and symmetric Schauder bases to the conditional, non-Schauder setting.
\end{abstract}
\thanks{J.\@ L.\@ Ansorena acknowledges the support of the Spanish Ministry for Science and Innovation under Grant PID2022-138342NB-I00 for \emph{Functional Analysis Techniques in Approximation Theory and Applications (TAFPAA)}}
\thanks{}
\maketitle
\section{Introduction}\noindent
The existence of `bases' of a certain type is one of the elements that determine the structure of separable Banach or, more generally, quasi-Banach spaces. Before we  discuss the types of bases studied in this paper, we point out that the minimum requirement we must impose on a sequence $\XB=(\xx_n)_{n=1}^\infty$ in a quasi-Banach space $\XX$ (over the real or complex field $\FF$) to call it a basis is completeness, along with the existence of a sequence $\XB^*=(\xx_n^*)_{n=1}^\infty$ in the dual space $\XX^*$ such that 
\begin{equation}\label{eq:BO}
\xx_n^*(\xx_k)=\delta_{n,k}, \quad n,k\in\NN,
\end{equation}
where $\delta_{n,k}$ is Kronecker's delta function defined by $\delta_{n,k}=1$ if $n=k$ and $\delta_{n,k}=0$ otherwise. A sequence $\XB$ for which there exists such a sequence $\XB^*$ is called a \emph{minimal system}. We say that $\XB^*$, which is unique provided that $\XB$ is complete, is a sequence of \emph{coordinate functionals} for $\XB$. A sequence $(\xx_n,\xx_n^*)_{n=1}^\infty$ in $\XX\times \XX^*$ that fulfils \eqref{eq:BO} is a \emph{biorthogonal system}.

A biorthogonal system $(\xx_n,\xx_n^*)_{n=1}^\infty$ yields for each $f\in\XX$ a formal series
\[
\sum_{n=1}^\infty \xx_n^*(f) \, \xx_n.
\]
This series might not determine $f$ even when $\XB=(\xx_n)_{n=1}^\infty$ is complete, that is, there are complete minimal systems $\XB$ such that the \emph{coefficient transform} relative to $\XB$ 
\[
f\mapsto \Fou[\XB](f)\coloneqq\enpar{\xx_n^*(f)}_{n=1}^\infty, \quad f\in\XX,
\]
is not one-to-one (see, e.g., \cite{HMVZ2008}). The systems in which this pathology does not occur are called \emph{total}.  In other words, a complete minimal system is total if and only if the weak* closed linear span of its coordinate functionals is the entire space $\XX^*$. Complete total minimal systems are called \emph{Markushevich bases}.

Let $B_\XX$ and $S_\XX$ denote, respectively, the closed unit ball and the unit sphere. Let $c_{00}$ denote the space of all eventually null scalar-valued sequences. Let $\Pt_{<\infty}(\Nt)$ (resp., $\Pt_\infty(\Nt)$) be the set of all finite (resp., infinite) subsets of a given set $\Nt$. Given a complete minimal system $\XB=(\xx_n)_{n=1}^\infty$ with coordinate functionals $\XB^*=(\xx_n^*)_{n=1}^\infty$, we say that $\lambda=(\lambda_n)_{n=1}^\infty$ is a \emph{multiplier} relative to $\XB$ if the linear map
\[
 f\mapsto \sum_{n\in \NN} \lambda_n \, \xx^*_n(f)\, \xx_n, \quad f\in\spn(\XB).
\]
extends to a bounded operator
\[
M_\lambda=M_\lambda[\XB,\XX]\colon \XX \to \XX.
\]
We say that $A\in\Pt(\NN)$ is a \emph{character} if $\chi_A$ is a multiplier, in which case we denote by
\[
S_A=S_A[\XB,\XX]=M_{\chi_A}
\]
the corresponding \emph{coordinate projection}. Any $\lambda\in c_{00}$ is a multiplier, and any $A\in \Pt_{<\infty}(\NN)$ is a character.

Many properties of complete minimal systems can be defined through these operators. Let us gather some of them.
\begin{enumerate}[label=(\alph*),leftmargin=*]
    \item If $S_{\enbrace{n}}$, $n\in\NN$, is uniformly bounded, we say that $\XB$ is an \emph{M-bounded} basis. Note that 
    \[
    \norm{S_{\enbrace{n}}}= \norm{\xx_n} \norm{\xx_n^*}, \quad n\in\NN.
    \]
    \item\label{it:Schauder} A sequence $(\xx_n)_{n=1}^\infty$ in $\XX$ such that any $f\in\XX$ can by univocally expanded as 
    \[
    f=\sum_{n=1}^\infty a_n \, \xx_n,
    \]
    where $a_n\in\FF$ for all $n\in\NN$, and the series is norm-convergent, is called a \emph{Schauder basis}. Let $\It$ be the set of all integer intervals in $\Pt_{<\infty}(\NN)$. It is known that $\XB$ is a Schauder basis if and only if it is a complete minimal system such that $S_I$, $I\in\It$, is uniformly bounded. Any Schauder basis is an M-bounded Markushevich basis.
    
    \item A sequence as in \ref{it:Schauder} satisfying the more demanding condition that the series converges unconditionally is called an \emph{unconditional basis}. It is known that the following are equivalent.
    \begin{itemize}
        \item $\XB$ is an unconditional basis.
        \item  $S_A$, $A\in\Pt_{<\infty}(\NN)$, is uniformly bounded.
        \item $M_\lambda$, $\lambda\in c_{00}\cap B_{\ell_\infty}$, is uniformly bounded.
    \end{itemize}
     If $\XB$ is an unconditional basis, then any sequence in $\ell_\infty$ is a multiplier, and $M_\lambda$, $\lambda\in B_{\ell_\infty}$, is uniformly bounded. 
\end{enumerate}

We refer the reader to \cite{AABW2021} for the basics on bases within the general framework of quasi-Banach spaces. 

As there are quasi-Banach spaces with no nontrivial functional \cite{Day1940}, the existence of bases is not guaranteed in general. Any separable Banach space has a Markusevich basis \cite{Markusevich1943}, and we can find it M-bounded \cites{Pel1976, Plic77}. However, there are separable Banach spaces with no Schauder basis \cite{Enflo1973}. Also, there are Banach spaces with a Schauder basis that fail to have an unconditional basis. This is the case of $L_1$ \cite{LinPel1968}, or the James space from \cite{James1951}. 

Other important properties of bases concern symmetry. To define them, given sets $\Mt$ and $\Nt$, we denote by $\Gamma(\Mt,\Nt)$ 
the set of all one-to-one maps from $\Mt$ to $\Nt$. Let $\XB=(\xx_n)_{n=1}^\infty$ be a complete minimal system with coordinate functionals $(\xx_n^*)_{n=1}^\infty$. We consider for each $\Nt\in\Pt(\NN)$ and $\psi\in\Gamma(\Nt,\NN)$ the  linear map
\begin{equation}\label{eq:shift}
 f\mapsto \sum_{n\in \Nt} 
\xx^*_n(f) \, \xx_{\psi(n)}, \quad f\in\spn(\XB).
\end{equation}
If this mapping \eqref{eq:shift} extends to a bounded operator
\[
L_\psi=L_\psi[\XB,\XX]\colon\XX\to \XX,
\]
we say that $\psi$ is a \emph{shift} relative to $\XB$, and that  $L_\psi$ is a \emph{shifting operator}. Note that if $\varphi$ and $\varphi$ are shifts, then so is $\psi\circ \varphi$, and $L_{\psi\circ\varphi}=L_\psi \circ L_\varphi$.

Let $\Pi(\Nt)$ be the set of all permutations of a set $\Nt$. If every map in $\Pi(\NN)$ is a shift relative to $\XB$, we say that $\XB$ is a \emph{symmetric basis}. It is well-known that symmetric Schauder bases are unconditional. However, this classical result does not extend to general complete minimal systems. In fact, there are Banach spaces with symmetric complete minimal systems that fail to be Markushevich bases (see \cite{AABCO2024}*{Example 3.9}). If $\XB$ is a symmetric unconditional basis, then $M_\psi$, $\psi\in\Pi(\NN)$, is a uniformly bounded family of operators.

Given partially ordered sets $\Mt$ and $\Nt$, we denote by $\Upsilon(\Mt,\Nt)$  the set of all increasing maps from $\Mt$ to $\Nt$. A complete minimal system is said to be a \emph{spreading basis} if any $\psi\in\Upsilon(\NN,\NN)$ is a shift, and the associated shifting operator $L_\psi$ is an isomorphism. Any symmetric Schauder basis is spreading (see \cite{Singer1961}). The only features of symmetric Schauder bases that one needs in many situations are spreadingness and unconditionality.  In fact, it was believed that any \emph{subsymmetric}, that is, spreading and unconditional, basis was symmetric until Garling \cite{Garling1968} proved otherwise.

M-bounded bases, Schauder bases, unconditional bases, symmetric Schauder bases, and subsymmetric bases can be characterized in terms of the uniform boundedness of suitable families of operators. In light of this, it is natural to ask whether suitable renormings make these properties hold isometrically, that is, with the norms of the associated operators bounded by one. As a matter of fact, the answer is positive in all cases and, in hindsight, we can provide proofs that follow the same pattern. Namely, we can infer the existence of the wished-for renormings from the following result, which we state in the general framework of quasi-Banach spaces. Recall that a quasi-Banach is said to be \emph{locally $p$-convex}, $0<p\le 1$, if the origin has a $p$-convex neighbourhood. By the Aoki--Rolewich theorem \cites{Aoki1942,Rolewicz1957}, any quasi-Banach space is locally $p$-convex for some $p$, in which case it can be equipped with an equivalent $p$-norm.

\begin{lemma}\label{lem:SG}
Let $0<p\le 1$ and $\XX$ be a locally $p$-convex quasi-Banach space. Suppose that $\Gt$ is a uniformly bounded semigroup of endomorphisms of $\XX$. Then, there is an equivalent $p$-norm for $\XX$ relative to which $\norm{T}\le 1$ for all $T\in \Gt$.
\end{lemma}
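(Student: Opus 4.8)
The plan is to build the new $p$-norm by averaging (taking a supremum of) the old quasi-norm over all operators in the semigroup. Since $\XX$ is locally $p$-convex, fix an equivalent $p$-norm $\norm{\cdot}$ on $\XX$; by hypothesis there is a constant $C\ge 1$ with $\norm{T}\le C$ for all $T\in\Gt$. Define
\[
\renorm{f}\coloneqq \sup_{T\in\Gt}\norm{Tf}, \quad f\in\XX,
\]
with the convention that if $\Gt$ contains no operator acting as the identity we first enlarge $\Gt$ to $\Gt\cup\{\Id\}$ (still a uniformly bounded semigroup, with the same bound after replacing $C$ by $\max\{C,1\}=C$), so that $\renorm{f}\ge\norm{f}$. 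Then for every $f$ we have the two-sided estimate $\norm{f}\le \renorm{f}\le C\norm{f}$, which shows $\renorm{\cdot}$ is finite-valued, equivalent to $\norm{\cdot}$, and vanishes only at $0$. Homogeneity $\renorm{\alpha f}=\abs{\alpha}\renorm{f}$ is immediate from that of $\norm{\cdot}$, and the $p$-triangle inequality $\renorm{f+g}^p\le\renorm{f}^p+\renorm{g}^p$ follows pointwise: for each $T$, $\norm{Tf+Tg}^p\le\norm{Tf}^p+\norm{Tg}^p\le\renorm{f}^p+\renorm{g}^p$, and taking the supremum over $T$ on the left preserves the bound. Hence $\renorm{\cdot}$ is an equivalent $p$-norm.

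It remains to check that every $S\in\Gt$ is a contraction for $\renorm{\cdot}$. For $f\in\XX$,
\[
\renorm{Sf}=\sup_{T\in\Gt}\norm{T(Sf)}=\sup_{T\in\Gt}\norm{(TS)f}\le\sup_{R\in\Gt}\norm{Rf}=\renorm{f},
\]
where the inequality uses that $TS\in\Gt$ for every $T\in\Gt$ (semigroup closure under composition), so the set $\{TS:T\in\Gt\}$ is contained in $\Gt$ and the supremum over it is no larger than the supremum over all of $\Gt$. Thus $\renorm{S}\le1$ for all $S\in\Gt$, as required.

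The only genuinely delicate point is the treatment of the identity: if $\Id\notin\Gt$, one must make sure that after adjoining it the family stays uniformly bounded by the \emph{same} constant (so that the equivalence constants do not blow up) — this is why we take $C\ge1$ from the outset. Everything else is routine: finiteness of the supremum is exactly the uniform boundedness hypothesis, and the $p$-convexity inequality passes to suprema because it is an inequality between nonnegative quantities that holds termwise. No completeness or separability is needed, and the argument is identical in the Banach case $p=1$.
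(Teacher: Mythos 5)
Your proposal is correct and follows essentially the same argument as the paper: adjoin the identity to $\Gt$, define the new norm as $\sup_{T\in\Gt}\norm{T(\cdot)}$, and use the semigroup property to get contractivity. The extra verifications you spell out (equivalence constants, the $p$-triangle inequality passing to suprema) are exactly what the paper leaves implicit.
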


\begin{proof}
Assume without loss of generality that $\Id_\XX\in\Gt$. If $\norm{\cdot}$ is a $p$-norm for $\XX$, then the mapping
\[
f\mapsto \sup_{T\in\Gt} \norm{T(f)}, \quad f\in\XX,
\]
defines an equivalent $p$-norm, and for any $f\in\XX$ and $S\in\Gt$ we have
\[
\Nnorm{S(f)}=\sup_{T\in  \Gt \circ S} \norm{T(f)} \le \Nnorm{f}.\qedhere
\]
\end{proof} 

If $\XB$ is M-bounded, then Lemma~\ref{lem:SG} applies with 
\[
\Gt=\Gt_r\coloneqq\enbrace{0} \cup \enbrace{ S_{\enbrace{n}} \colon n\in\NN}.
\]
If $\XB$ is a Schauder basis, then Lemma~\ref{lem:SG} applies with
\[
\Gt=\Gt_b\coloneqq\enbrace{ S_I \colon I\in \It}.
\]
In turn, if $\XB$ is an unconditional basis, $\XB$ applies with
\[
\Gt=\Gt_u\coloneqq\enbrace{M_\lambda \colon \lambda\in B_{\ell_\infty}}.
\]
If $\XB$ is a symmetric Schauder basis, then $\XB$ applies with 
\[
\Gt=\Gt_m\coloneqq\enbrace{L_\psi\colon \psi\in \Pi(\NN)},
\]
so we get a renorming relative to which $L_\psi$ is an isometry for every $\psi\in\Pi(\NN)$. If we use
\[
\Gt_{u,m}=\enbrace{M_\lambda \circ T\colon T\in\Gt_m, \, \lambda\in B_{\ell_\infty}}
\]
instead, the renorming we get has the additional property that $M_\lambda$ is a contraction for every $\lambda\in B_{\ell_\infty}$.

The semigroup associated with subsymmetric bases is somewhat more obscure. Namely, $\XB$ is subsymmetric if and only if the semigroup  
\[
\Gt_s=\bigcup_{A\in\Pt_{<\infty}(\NN)}\enbrace{L_\psi\colon \psi\in \Upsilon(A,\NN)}
\]
is uniformly bounded. Hence, if we feed Lemma~\ref{lem:SG} with $\Gt_s$ we obtain a renorming relative to which $L_\psi$ is an isometric embedding for every $\psi\in\Upsilon(\NN,\NN)$. If we feed  Lemma~\ref{lem:SG} with the larger semigroup
\[
\Gt_{u,s}=\enbrace{M_\lambda \circ T \colon T\in \Gt_s, \, \lambda\in B_{\ell_\infty}}
\]
instead, then, $M_\lambda$ turns into a contraction for every $\lambda\in B_{\ell_\infty}$ (see \cite{Ansorena2018}). 

We say that a complete minimal system is \emph{conditional} if it fails to be unconditional. In this paper, we seek isometric renormings of quasi-Banach spaces with symmetric or spreading conditional bases. In the symmetric case, the semigroup $\Gt_m$ is still available, so we must only care about the uniform boundedness of the operators involved. In contrast, it is by no means clear whether a uniformly bounded semigroup of operators related to spreadingness exists. To achieve isometric renormings of quasi-Banach spaces with conditional spreading bases, we must pursue different techniques. 

We conclude this introductory section by setting some additional terminology we will use. Given a set $A$ in a quasi-Banach space $\XX$, we denote by $\enbrak{A}$ its closed linear span. A sequence $\XB=(\xx_n)_{n=1}^\infty$ in $\XX$ is said to be \emph{uniformly separated} if 
\[
\inf\enbrace{ \norm{\xx_n-\xx_k} \colon k,n\in\NN, \, k\not=n}>0.
\]
We say that $\XB$ is \emph{semi-normalized} if 
\[
\inf_{n\in\NN} \norm{\xx_n}>0, \quad \sup_{n\in\NN}\norm{\xx_n}<\infty.
\]
Suppose that $\XB=(\xx_n)_{n=1}^\infty$ is a complete minimal system  with coordinate functionals $\XB^*=(\xx_n^*)_{n=1}^\infty$. Then, $\XB$ is semi-normalized and M-bounded if and only if
\[
\sup_{n\in\NN} \max\enbrace{\norm{\xx_n}, \norm{\xx_n^*}}<\infty.
\]
(see \cite{AABW2021}*{Lemma 2.5}). The boundedness of $\XB^*$ implies that the coefficient transform $\Fou[\XB]$ is a bounded operator from $\XX$ into $c_0$.

The \emph{support} of $f\in\XX$ relative to $\XB$ is the set 
\[
\supp(f)=\enbrace{n\in\NN \colon \xx_n^*(f)\not=0}.
\]
If $f\in\spn(\XB)$, then $\supp(f)$ is finite and, regardless whether $\XB$ is total or not,
\[
f= S_{\supp(f)}(f).
\]

We record for further reference the elementary fact that any map in 
\[
\Ot\coloneqq\bigcup_{\Nt\in\Pt_\infty(\Nt)} \Upsilon(\Nt,\NN)
\]
is univocally determined by its domain and its range. Bearing this in mind,  given $\Nt\in\Pt_\infty(\NN)$, it is useful to denote by $\psi_\Nt$ the map in $\Upsilon(\NN,\NN)$ defined by  $\psi(\NN)=\Nt$.
\begin{lemma}\label{lem:DRI}
Given $\Nt$, $\Mt\in\Pt_\infty(\NN)$, there is a unique  $\psi\in\Upsilon(\Nt,\NN)$ such that $\psi(\Nt)=\Mt$. In fact,
$\psi=\psi_\Mt \circ \psi_\Nt^{-1}$.
\end{lemma}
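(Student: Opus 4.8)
The plan is to reduce the statement to the increasing enumerations of $\Nt$ and $\Mt$. First I would write $\Nt=\{n_1<n_2<\cdots\}$ and $\Mt=\{m_1<m_2<\cdots\}$, so that $n_k=\psi_\Nt(k)$ and $m_k=\psi_\Mt(k)$ for every $k\in\NN$; thus $\psi_\Nt$ and $\psi_\Mt$ are strictly increasing bijections of $\NN$ onto $\Nt$ and $\Mt$, respectively, and consequently $\psi_\Nt^{-1}\colon\Nt\to\NN$ is strictly increasing as well.

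For existence, I would simply verify that $\psi\coloneqq\psi_\Mt\circ\psi_\Nt^{-1}$ works: it is defined on all of $\Nt$ because $\psi_\Nt^{-1}$ maps $\Nt$ onto $\NN$; it is strictly increasing, being a composition of strictly increasing maps, whence $\psi\in\Upsilon(\Nt,\NN)$; and its range is $\psi_\Mt(\psi_\Nt^{-1}(\Nt))=\psi_\Mt(\NN)=\Mt$.

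For uniqueness, I would take an arbitrary $\varphi\in\Upsilon(\Nt,\NN)$ with $\varphi(\Nt)=\Mt$ and show $\varphi=\psi$. Since $\varphi$ is strictly increasing it is a bijection of $\Nt$ onto $\Mt$, i.e.\ an order isomorphism between these two well-ordered sets, so an induction on $k$ will force $\varphi(n_k)=m_k$: one has $\varphi(n_1)=\min\varphi(\Nt)=m_1$, and assuming $\varphi(n_j)=m_j$ for all $j<k$, the strict monotonicity of $\varphi$ together with $\varphi(\Nt)=\Mt$ gives $\varphi(n_k)=\min\bigl(\Mt\setminus\{m_1,\dots,m_{k-1}\}\bigr)=m_k$. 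Alternatively, this is just the elementary fact recorded above that a member of $\Ot$ is determined by its domain and its range, applied to $\psi$ and $\varphi$, which share the domain $\Nt$ and the range $\Mt$.

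I do not anticipate any genuine obstacle here; the only point requiring care is the convention that "increasing" is to be read as strictly increasing — so that maps in $\Upsilon$ are order embeddings and are hence pinned down by their domain and range — and keeping the two enumerations $\psi_\Nt$ and $\psi_\Mt$ aligned.
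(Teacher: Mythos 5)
Your proof is correct and matches the paper's treatment: the paper gives no proof at all, presenting the lemma as the elementary fact (recorded just before it) that a map in $\Ot$ is determined by its domain and range, which is exactly what your enumeration-plus-induction argument verifies. Your remark that ``increasing'' must be read as strictly increasing is the right reading of the paper's convention, since otherwise uniqueness would fail.
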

\section{Renorming quasi-Banach spaces with spreading bases}\label{sect:Spreading}\noindent
We start our study of conditional spreading bases with a series of lemmas that work for sequences that may not be minimal systems. To state them, given a sequence $(\xx_n)_{n=1}^\infty$ in a quasi-Banach space $\XX$ and $\Nt\in\Pt(\NN)$, we say that $\psi\in\Gamma(\Nt,\NN)$ is \emph{translation} relative to $\XB$ if the linear map
\[
\spn\enpar{\xx_n \colon n\in\Nt} \to \XX, \quad \xx_n\mapsto \xx_{\psi(n)}
\]
extends to an isomorphism 
\[
T_\psi\colon \enbrak{\xx_n \colon n\in\Nt} \to  \enbrak{\xx_n \colon n\in \psi(\Nt)}.
\]
Note that if $\varphi$ and $\psi$ are translations, then so are $\psi\circ\varphi$ and $\psi^{-1}$.
Furthermore, since we are not assuming unconditionality, the \emph{translation operator} $T_\psi$ may not extend to a shifting operator $L_\psi$ even when $\XB$ is a complete minimal system. We say that $\XB$ is a \emph{spreading sequence} if every $\psi\in\Upsilon(\NN,\NN)$ is a translation. If $\XB$ is a spreading basis, then $T_\psi=L_\psi$ and $T_{\psi^{-1}}=L_\psi^{-1}$ for all $\psi\in\Upsilon(\NN,\NN)$. So, our definition is consistent.

By Lemma~\ref{lem:DRI}, any map in $\Ot$ is a translation relative to any spreading sequence. A spreading sequence for which there is a constant $C\in[1,\infty)$ such that $\norm{T_\psi}\le C$ for all  $\psi\in\Ot$ is said to be \emph{$C$-spreading}. A $1$-spreading sequence will be called \emph{isometrically spreading}. Needless to say, constant sequences are isometrically spreading.

\begin{lemma}\label{lem:SpreadingDiference}
Let $\XB=(\xx_n)_{n=1}^\infty$ be a spreading sequence in a quasi-Banach space $\XX$.
\begin{enumerate}[label=(\roman*)]
    \item\label{it:SubSeq} $(\xx_{\psi(n)})_{n=1}^\infty$ is spreading for all $\psi\in\Upsilon(\NN,\NN)$.
    \item\label{it:Diff} $\YB=\enpar{\xx_{2n}-\xx_{2n-1}}_{n=1}^\infty$ is spreading. 
\end{enumerate}
\end{lemma}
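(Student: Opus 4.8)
The plan is to reduce both statements to the ``functoriality'' of translations already recorded above, namely that compositions and inverses of translations are translations and that---by the remark following Lemma~\ref{lem:DRI}---\emph{every} map in $\Ot$ is a translation relative to any spreading sequence. So, in each part, given an arbitrary $\phi\in\Upsilon(\NN,\NN)$, it suffices to exhibit one translation of $\XB$ whose associated translation operator carries the new sequence onto its $\phi$-spread copy.

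For part~\ref{it:SubSeq}, put $\zz_n=\xx_{\psi(n)}$ and fix $\phi\in\Upsilon(\NN,\NN)$. Since $\psi$ is a strictly increasing bijection from $\NN$ onto $\psi(\NN)$, the rule $g(\psi(n))=\psi(\phi(n))$ defines a map $g\colon\psi(\NN)\to\NN$, and $g$ is increasing as a composition of increasing maps, so $g\in\Upsilon(\psi(\NN),\NN)\subseteq\Ot$. Hence $g$ is a translation relative to $\XB$, and the isomorphism $T_g\colon\enbrak{\xx_m\colon m\in\psi(\NN)}\to\enbrak{\xx_{g(m)}\colon m\in\psi(\NN)}$ is, after the identifications $\enbrak{\xx_m\colon m\in\psi(\NN)}=\enbrak{\zz_n\colon n\in\NN}$ and $\enbrak{\xx_{g(m)}\colon m\in\psi(\NN)}=\enbrak{\zz_{\phi(n)}\colon n\in\NN}$, exactly the map $\zz_n\mapsto\zz_{\phi(n)}$. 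Thus $\phi$ is a translation relative to $(\zz_n)_{n=1}^\infty$; as $\phi$ was arbitrary, that sequence is spreading. (Equivalently, $T_g=T_{\psi\circ\phi}\circ T_\psi^{-1}$.)

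For part~\ref{it:Diff}, put $\yy_n=\xx_{2n}-\xx_{2n-1}$ and fix $\phi\in\Upsilon(\NN,\NN)$. The creative step is the ``doubled'' map $\theta\colon\NN\to\NN$ given by $\theta(2n-1)=2\phi(n)-1$ and $\theta(2n)=2\phi(n)$. This $\theta$ is strictly increasing: within a block $2\phi(n)-1<2\phi(n)$, and across consecutive blocks $\theta(2n)=2\phi(n)<2\phi(n+1)-1=\theta(2n+1)$ precisely because $\phi(n)<\phi(n+1)$ forces $\phi(n)+1\le\phi(n+1)$. Hence $\theta\in\Upsilon(\NN,\NN)$ is a translation of $\XB$, and $T_\theta(\yy_n)=\xx_{\theta(2n)}-\xx_{\theta(2n-1)}=\xx_{2\phi(n)}-\xx_{2\phi(n)-1}=\yy_{\phi(n)}$. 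Since $\enbrak{\yy_n\colon n\in\NN}$ is a closed subspace of the domain $\enbrak{\xx_n\colon n\in\NN}$ of $T_\theta$, restricting the isomorphism $T_\theta$ to it yields an isomorphism onto its image; that image is closed and, containing $\spn(\yy_{\phi(n)}\colon n\in\NN)$ while being contained in $\enbrak{\yy_{\phi(n)}\colon n\in\NN}$ by continuity, equals $\enbrak{\yy_{\phi(n)}\colon n\in\NN}$. So $\phi$ is a translation relative to $\YB$, and $\YB$ is spreading.

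I do not anticipate a genuine obstacle here; the work is essentially bookkeeping. The two points that deserve a little care are the verification that $\theta$ is strictly increasing---the inter-block inequality is exactly where the strict monotonicity of $\phi$ is used---and the identification of $T_\theta(\enbrak{\yy_n\colon n\in\NN})$ with $\enbrak{\yy_{\phi(n)}\colon n\in\NN}$, which rests on the fact that an isomorphism is a homeomorphism onto its (complete, hence closed) range and therefore maps closed linear spans onto closed linear spans.
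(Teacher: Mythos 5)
Your proof is correct and follows essentially the paper's route: for part~\ref{it:Diff} you use exactly the same ``doubled'' map ($\theta(2n-1)=2\phi(n)-1$, $\theta(2n)=2\phi(n)$) and restrict the resulting translation operator to $\enbrak{\yy_n\colon n\in\NN}$, while for part~\ref{it:SubSeq} you merely spell out, via the recorded fact that every map in $\Ot$ is a translation, a step the paper dismisses as clear. The extra care you take with the identification of the range as the closed linear span is a harmless (and correct) elaboration, not a different argument.
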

\begin{proof}
It is clear that \ref{it:SubSeq} holds. To prove \ref{it:Diff}, set $\yy_n=\xx_{2n}-\xx_{2n-1}$ for all $n\in\NN$. Fix $\psi\in\Upsilon(\NN,\NN)$. Define $\varphi\colon\NN\to\NN$ by $\varphi(2n-1)=2\psi(n)-1$ and    $\varphi(2n)=2\psi(n)$ for all $n\in\NN$. We have that $\varphi\in\Upsilon(\NN,\NN)$, so it is a translation relative to $\XB$. Since $T_\varphi(\yy_n)=\yy_{\psi(n)}$ for all $n\in\NN$, the restriction of $T_\varphi$ to $\enbrak{\yy_n, n\in\NN}$ witnesses that $\psi$ is a translation relative to $\YB$.
\end{proof}

\begin{lemma}\label{lem:SNSubSym}
Let $\XB=(\xx_n)_{n=1}^\infty$ be a spreading sequence in a quasi-Banach space $\XX$. If $\XB$ is linearly dependent, then it is constant.
\end{lemma}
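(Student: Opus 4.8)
The plan is to exploit the fact that, thanks to translations, one linear relation among the $\xx_n$ forces infinitely many. Since $\XB$ is linearly dependent we may write $\sum_{i=1}^{k}a_i\xx_{n_i}=0$ with $n_1<\cdots<n_k$ and, after dropping trailing zero coefficients, $a_k\neq0$; set $c=n_k$. For every $\psi\in\Upsilon(\NN,\NN)$ the translation operator $T_\psi\colon\enbrak{\xx_n\colon n\in\NN}\to\enbrak{\xx_n\colon n\in\psi(\NN)}$ is a continuous linear map satisfying $T_\psi(\xx_n)=\xx_{\psi(n)}$, so applying it to the relation yields $\sum_{i=1}^{k}a_i\xx_{\psi(n_i)}=0$.

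First I would manufacture a repetition. Let $\varphi\in\Upsilon(\NN,\NN)$ be the identity on $\{1,\dots,c-1\}$ and $\varphi(m)=m+1$ for $m\ge c$. Then $\varphi(n_i)=n_i$ for $i<k$ while $\varphi(n_k)=c+1$, so the relation turns into $\sum_{i<k}a_i\xx_{n_i}+a_k\xx_{c+1}=0$; subtracting the original relation and using $a_k\neq0$ gives $\xx_{c}=\xx_{c+1}$ (this also covers $k=1$, where the sum over $i<k$ is empty). Translating this identity by the shifts $m\mapsto m+(p-c)$ for $p\ge c$ then yields $\xx_p=\xx_{p+1}$ for all $p\ge c$, whence $\xx_m=\xx_c=:v$ for every $m\ge c$.

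What remains, and this is the only genuine difficulty, is to control the finitely many indices $m<c$: maps in $\Upsilon(\NN,\NN)$ only move indices forward, so nothing so far constrains $\xx_1,\dots,\xx_{c-1}$. I would finish with a collapsing argument. The shifted sequence $\YB=(\xx_n-v)_{n=1}^{\infty}$ is again spreading: for any $\psi\in\Upsilon(\NN,\NN)$ one has $\psi(c)\ge c$, so $T_\psi(v)=T_\psi(\xx_c)=\xx_{\psi(c)}=v$, and therefore $T_\psi(\xx_n-v)=\xx_{\psi(n)}-v$; since $T_\psi$ is an isomorphism of $\enbrak{\xx_n\colon n\in\NN}$ onto $\enbrak{\xx_n\colon n\in\psi(\NN)}$, its restriction to the closed subspace $\enbrak{\xx_n-v\colon n\in\NN}$ is an isomorphism onto $\enbrak{\xx_n-v\colon n\in\psi(\NN)}$, which is exactly what it means for $\YB$ to be spreading. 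Now $\xx_m-v=0$ for all $m\ge c$, so feeding $\YB$ the increasing map $\psi(m)=m+c-1$ (whose range is $\{c,c+1,\dots\}$) gives an isomorphism of $\enbrak{\xx_n-v\colon n\in\NN}$ onto $\enbrak{\xx_m-v\colon m\ge c}=\{0\}$. Hence $\enbrak{\xx_n-v\colon n\in\NN}=\{0\}$, i.e.\ $\xx_n=v$ for all $n$, so $\XB$ is constant. The only incidental point to verify is the standard fact that restricting a quasi-Banach space isomorphism to a closed subspace yields an isomorphism onto the closure of the image.
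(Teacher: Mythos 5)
Your proposal is correct and follows essentially the same route as the paper: a translation fixing every index below the top of the linear relation forces the tail of the sequence to be constant, and the injectivity of translation operators then transfers this constancy to the initial indices. Your detour through the auxiliary spreading sequence $(\xx_n-v)_{n=1}^\infty$ is just a heavier packaging of the paper's direct injectivity step (there one picks an increasing $\varphi$ with $\varphi(m)=k$ and concludes from $T_\varphi(\xx_m)=\xx_k=\xx_{\varphi(k)}=T_\varphi(\xx_k)$ that $\xx_m=\xx_k$), but it is sound as written.
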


\begin{proof}
Suppose that $\XB$ is linearly dependent. Pick $A\in \Pt_{<\infty}(\NN)\setminus\{\emptyset\}$ and $(a_n)_{n\in A}\in(\FF\setminus\{0\})^A$ such that 
\[
f\coloneqq \sum_{n\in A} a_n \, \xx_n=0.
\]
Set $k=\max(A)$. Given $n\in\NN\cap[k+1,\infty)$, there is $\psi\in\Upsilon(\NN,\NN)$ such that $\psi(k)=n$ and $\psi(j)=j$ whenever $j<k$. We have
\[
a_k\enpar{\xx_{n}-\xx_k}=T_{\psi}(f)-f=0.
\]
Hence, $\xx_n=\xx_k$.  Now, given $m\in\NN\cap[1,k-1]$, there is $\varphi\in\Upsilon(\NN,\NN)$ such that $\psi(m)=k$.  Since $\varphi(k)>k$,
\[
T_\varphi(\xx_m)=\xx_k=\xx_{\varphi(k)}=T_\varphi(\xx_k).
\]
Hence $\xx_m=\xx_k$.
\end{proof}

\begin{lemma}\label{lem:SNSubSymTris}
Let $\XB=(\xx_n)_{n=1}^\infty$ be a spreading sequence in a quasi-Banach space $\XX$. If $\XB$ is not constant, then it is uniformly separated.
\end{lemma}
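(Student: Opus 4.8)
The plan is to argue by contradiction. The crucial point is that, although a spreading sequence in the weak sense used here need not be $C$-spreading, every individual translation operator $T_\psi$ is bounded, and one bounded operator may be applied at once to a norm-convergent series of many small differences. So assume $\XB$ is not constant; by Lemma~\ref{lem:SNSubSym} it is linearly independent, in particular $\xx_n\neq\xx_k$ whenever $n\neq k$, and, replacing $\norm{\cdot}$ by an equivalent $p$-norm (which alters neither hypothesis nor conclusion), I may assume $\norm{\cdot}$ is a $p$-norm. Suppose for contradiction that $\inf\{\norm{\xx_n-\xx_k}\colon n\neq k\}=0$. Since no difference of two distinct members of $\XB$ vanishes, no pair can repeat infinitely often, so there are infinitely many distinct pairs $(n_i,k_i)$ with $n_i<k_i$ and $\norm{\xx_{n_i}-\xx_{k_i}}\to0$. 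A routine thinning produces disjoint increasing blocks: if $\sup_i n_i<\infty$, fix a first coordinate $a$ occurring infinitely often and use $\norm{\xx_{k_i}-\xx_{k_j}}^p\le\norm{\xx_{k_i}-\xx_a}^p+\norm{\xx_a-\xx_{k_j}}^p\to0$ to pass to consecutive and then alternate indices; if $\sup_i n_i=\infty$, greedily extract a subsequence with $n_{i_{l+1}}>k_{i_l}$. After relabelling I thus have $p_1<q_1<p_2<q_2<\cdots$ with $\norm{\xx_{p_i}-\xx_{q_i}}\to0$.

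Next I would transport these small differences onto the even/odd differences and then back. Let $\tau\in\Ot$ be the shift-down map $j\mapsto j-1$ of $\{2,3,\dots\}$ onto $\NN$; since it lies in $\Ot$ it is a translation relative to the spreading sequence $\XB$, so I may set $\beta\coloneqq\max\{1,\norm{T_\tau}\}<\infty$, and then $(T_\tau)^m=T_{\tau^m}$ has norm at most $\beta^m$. Thinning once more I may assume $\norm{\beta^{2i}(\xx_{p_i}-\xx_{q_i})}<2^{-i}$ for all $i$. Put $\Nt=\{p_i,q_i\colon i\in\NN\}\in\Pt_\infty(\NN)$ and let $\psi\in\Upsilon(\Nt,\NN)$ be the increasing map with $\psi(p_i)=2i-1$ and $\psi(q_i)=2i$ (unique by Lemma~\ref{lem:DRI}, and a translation relative to $\XB$). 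The series $w\coloneqq\sum_i\beta^{2i}(\xx_{p_i}-\xx_{q_i})$ converges in $\enbrak{\xx_n\colon n\in\Nt}$ (its partial sums are $p$-norm Cauchy), so continuity of $T_\psi$ yields that $\sum_i\beta^{2i}(\xx_{2i-1}-\xx_{2i})=T_\psi(w)$ converges; in particular $\beta^{2i}\norm{\xx_{2i-1}-\xx_{2i}}\to0$. Now transport backwards: $\tau^{2i-2}$ sends $2i-1\mapsto1$ and $2i\mapsto2$, so $T_{\tau^{2i-2}}$ carries $\xx_{2i-1}-\xx_{2i}$ onto $\xx_1-\xx_2$, whence $\norm{\xx_1-\xx_2}\le\beta^{2i-2}\norm{\xx_{2i-1}-\xx_{2i}}=\beta^{-2}\,\beta^{2i}\norm{\xx_{2i-1}-\xx_{2i}}$ for every $i$; letting $i\to\infty$ forces $\xx_1=\xx_2$, a contradiction.

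One may equivalently run the last two steps inside the sequence $\YB=(\xx_{2n}-\xx_{2n-1})_n$, which is spreading by Lemma~\ref{lem:SpreadingDiference} and non-constant (a coincidence $\yy_1=\yy_2$ would mean $\xx_1-\xx_2-\xx_3+\xx_4=0$, against linear independence), hence linearly independent by Lemma~\ref{lem:SNSubSym}, with $\yy_1\neq0$ playing the role of $\xx_1-\xx_2$. The only real obstacle is conceptual: the obvious attempt — move a small difference $\xx_p-\xx_q$ onto the fixed nonzero vector $\xx_1-\xx_2$ by one translation and estimate its norm — fails, because without uniform spreading the norms of those translations are uncontrolled. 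The device above trades ``one vector, many operators'' for ``one operator, many vectors'': a single bounded translation applied to a geometrically weighted sum of small differences forces a whole tail of even/odd differences to decay faster than the geometric rate $\beta^{-2i}$ that governs the iterates of the shift-down operator, and it is precisely this mismatch of rates that makes the backward transport collapse $\xx_1-\xx_2$ to $0$. The remaining ingredients — the thinning, the $p$-norm convergence estimates, and the multiplicativity $T_{\psi\circ\varphi}=T_\psi\circ T_\varphi$ of translations — are routine.
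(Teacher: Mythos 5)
Your proof is correct, but it reaches the contradiction by a genuinely different mechanism than the paper. The paper first rules out that any $\xx_n$ is a cluster point of the sequence (if a subsequence converged, the whole sequence would converge to a limit $g$ with $T_\psi(g)=g$ for every $\psi\in\Upsilon(\NN,\NN)$, which is incompatible with the terms being pairwise distinct); this gives the local separation property $\inf\{\norm{\xx_n-\xx_k}\colon n\in A,\ k\neq n\}>0$ for every finite $A$, which is what lets one place each new small-difference pair beyond all previously used indices. It then builds $\rho\in\Upsilon(\NN,\NN)$ so that the differences $\yy_j=\xx_{\rho(2j)}-\xx_{\rho(2j-1)}$ satisfy $\norm{\yy_{j+1}}\le\varepsilon_j\norm{\yy_j}$ with $\varepsilon_j\to0$, and contradicts Lemma~\ref{lem:SpreadingDiference}: the shift $j\mapsto j+1$ cannot be a translation relative to the (spreading) sequence $(\yy_j)_j$ because its inverse would be unbounded. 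You bypass the cluster-point preliminary through your two-case thinning (bounded first coordinates force a convergent subsequence among the $\xx_{k_i}$, which already supplies interlaced small pairs), and you never use Lemma~\ref{lem:SpreadingDiference}; instead your contradiction uses only two individual bounded translation operators, $T_\psi$ and the shift-down $T_\tau$, via the geometrically weighted series, trading the absence of uniform bounds on translation norms for a rate comparison: convergence of $T_\psi(w)$ forces $\beta^{2i}\norm{\xx_{2i-1}-\xx_{2i}}\to0$, while the iterated back-shift costs only $\beta^{2i-2}$, so $\xx_1-\xx_2$ collapses to $0$. Both arguments rest on the same two ingredients — distinctness from Lemma~\ref{lem:SNSubSym} and the two-sidedness (isomorphism) built into the definition of translation — but the paper's route is shorter because Lemma~\ref{lem:SpreadingDiference} does the transporting for it, whereas yours is more self-contained and makes the quantitative mismatch of rates explicit.
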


\begin{proof}
Assume that $\XB$ has a convergent subsequence. Pick $A\in\Pt_\infty(\NN)$ such that $(\xx_n)_{n\in A}$ converges. Let $\varphi\in\Upsilon(A,\NN)$ be onto. We have 
\[
\lim_{n\in A} \xx_{\varphi(n)}=\lim_{n\in A} T_\varphi(\xx_n)=g\coloneqq L_\varphi\enpar{ \lim_{n\in A} \xx_n}.
\]
Consequently, $\lim_{n\in\NN} \xx_n=g$. Then, for any $\psi\in\Upsilon(\NN,\NN)$,
\[
T_\psi(g)=\lim_{n\in\NN} T_\psi(\xx_n)=\lim_{n\in\NN} \xx_{\psi(n)}=g.
\]
Since, by Lemma~\ref{lem:SNSubSym}, $\xx_n\not=\xx_k$ whenever $n\not=k$, $\xx_n\not=g$ for all $n\in\NN$. Therefore, regardless of whether $\XB$ has or does not has a convergent subsequence, no $\xx_n$, $n\in\NN$, is a limit of any subsequence of $\XB$. We infer that
\begin{equation}\label{eq:Separation}
\inf\enbrace{\norm{\xx_n-\xx_k} \colon n\in A, \, k\in\NN, \, k\not=n }>0, \quad A\in\Pt_{<\infty}(\NN).   
\end{equation}
Assume by contradiction that $\XB$ is not uniformly separated. Fix $(\varepsilon_j)_{j=1}^\infty$ in $(0,\infty)$ with $\lim_j \varepsilon_j=0$.  Use \eqref{eq:Separation} to recursively construct $\rho\in\Upsilon(\NN,\NN)$ such that the sequence $\YB=(\yy_j)_{j=1}^\infty$ given by
\[
\yy_j = \xx_{\rho(2j)}- \xx_{\rho(2j-1)}
\]
satisfies $\norm{\yy_{j+1}}\le \varepsilon_j \norm{\yy_j}$. On the one hand the mapping $j\mapsto j+1$ is not a translation relative to $\YB$. On the other hand, $\YB$ is spreading by Lemma~\ref{lem:SpreadingDiference}. This absurdity ends the proof.
\end{proof}

\begin{lemma}\label{lem:SNSubSymBis}
Let $\XB=(\xx_n)_{n=1}^\infty$ be a spreading sequence in a quasi-Banach space $\XX$. Then, $\XB$ is bounded.
\end{lemma}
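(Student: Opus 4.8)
The plan is to argue by contradiction. Assume $\XB$ is unbounded. I will extract from $\XB$ a new spreading sequence $\YB=(\yy_j)_{j=1}^\infty$ along which the norms grow faster than linearly, namely $\norm{\yy_{j+1}}\ge j\,\norm{\yy_j}$, and then observe that this is incompatible with $\YB$ being spreading: since the shift $j\mapsto j+1$ belongs to $\Ot$, it is a translation relative to $\YB$, so the isomorphism $T$ with $T(\yy_j)=\yy_{j+1}$ has finite norm and forces $\norm{\yy_{j+1}}\le\norm{T}\,\norm{\yy_j}$ for every $j$.

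To set this up, first note that constant sequences are bounded, so by Lemma~\ref{lem:SNSubSym} we may assume $\XB$ is linearly independent; in particular $\xx_n\ne\xx_k$ whenever $n\ne k$ (equivalently, $\XB$ is uniformly separated by Lemma~\ref{lem:SNSubSymTris}). Since $\XX$ is locally $p$-convex for some $0<p\le1$, we may take $\norm{\cdot}$ to be a $p$-norm, so $\norm{\xx_b-\xx_a}^p\ge\norm{\xx_b}^p-\norm{\xx_a}^p$; as every tail of $\XB$ inherits unboundedness, it follows that for each $N\in\NN$ and each $M<\infty$ there are indices $N\le a<b$ with $\norm{\xx_b-\xx_a}\ge M$ (take $a=N$ and $b>N$ with $\norm{\xx_b}$ large enough). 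Using this repeatedly, I recursively construct strictly increasing indices $a_1<b_1<a_2<b_2<\cdots$ — starting from $a_1=1$, $b_1=2$ and, given $b_j$, choosing $b_j<a_{j+1}<b_{j+1}$ with $\norm{\xx_{b_{j+1}}-\xx_{a_{j+1}}}\ge j\,\norm{\xx_{b_j}-\xx_{a_j}}$. Let $\rho\in\Upsilon(\NN,\NN)$ be determined by $\rho(2j-1)=a_j$ and $\rho(2j)=b_j$, and put $\yy_j=\xx_{\rho(2j)}-\xx_{\rho(2j-1)}=\xx_{b_j}-\xx_{a_j}$; then $\norm{\yy_j}>0$ and $\norm{\yy_{j+1}}\ge j\,\norm{\yy_j}$ for all $j$. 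By Lemma~\ref{lem:SpreadingDiference}\ref{it:SubSeq} the sequence $(\xx_{\rho(n)})_{n=1}^\infty$ is spreading, and hence by Lemma~\ref{lem:SpreadingDiference}\ref{it:Diff} so is $\YB=(\yy_j)_{j=1}^\infty$. Combining the growth estimate with $\norm{\yy_{j+1}}\le\norm{T}\,\norm{\yy_j}$ and dividing by $\norm{\yy_j}>0$ yields $j\le\norm{T}$ for every $j\in\NN$, which is absurd.

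The only genuinely delicate point is guaranteeing that the sequence $\YB$ produced by the recursion is really spreading — this is what licenses the shift $j\mapsto j+1$ being a bounded translation on $\YB$ and so closes the contradiction. This is exactly what Lemma~\ref{lem:SpreadingDiference} delivers, but only because the chosen pairs $(a_j,b_j)$ are arranged into pairwise disjoint, increasing blocks $a_1<b_1<a_2<b_2<\cdots$; this is why the recursion always searches for the next pair strictly beyond $b_j$, and why one must work with the even–odd difference sequence of a subsequence of $\XB$ rather than with differences of overlapping support.
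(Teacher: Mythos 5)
Your proof is correct, but it takes a more roundabout route than the paper's. After the same reduction to the linearly independent case via Lemma~\ref{lem:SNSubSym}, the paper argues directly on $\XB$ itself: since $\xx_n\neq 0$ for every $n$, one recursively picks $\psi\in\Upsilon(\NN,\NN)$ with $\norm{\xx_{\psi(j+1)}}\ge R_j\norm{\xx_{\psi(j)}}$ for an unbounded sequence $(R_j)_j$, and then the shift $j\mapsto j+1$ along the (spreading) subsequence cannot be a translation — no differences, no passage to an equivalent $p$-norm, and only part \ref{it:SubSeq} of Lemma~\ref{lem:SpreadingDiference} implicitly involved. You instead recycle the difference-sequence device that the paper uses to prove Lemma~\ref{lem:SNSubSymTris}: you need the reverse $p$-triangle inequality $\norm{\xx_b-\xx_a}^p\ge\norm{\xx_b}^p-\norm{\xx_a}^p$ (for which the Aoki--Rolewicz renorming is legitimate, since boundedness is invariant under equivalent quasi-norms, though the plain quasi-triangle inequality would also do) to transfer unboundedness of the tails to the differences $\yy_j=\xx_{b_j}-\xx_{a_j}$, plus both parts of Lemma~\ref{lem:SpreadingDiference} to see that $\YB$ is spreading, and linear independence to guarantee $\norm{\yy_j}>0$ before dividing. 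All of these steps are sound, and your closing observation about keeping the blocks $a_1<b_1<a_2<b_2<\cdots$ disjoint and increasing is exactly the point that makes Lemma~\ref{lem:SpreadingDiference} applicable; the only cost relative to the paper's argument is extra machinery for the same conclusion (and the parenthetical appeal to Lemma~\ref{lem:SNSubSymTris} is dispensable: distinctness of the $\xx_n$ already follows from linear independence).
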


\begin{proof}
By Lemma~\ref{lem:SNSubSym}, we can suppose that $\XB$ is linearly independent. Assume by contradiction that $\XB$ is unbounded. Pick $(R_j)_{j=1}^\infty$ in $(0,\infty)$ unbounded. Use that $\xx_n\not=0$ for all $n\in\NN$ to recursively construct $\psi\in\Upsilon(\NN,\NN)$ such that
\[
\norm{\xx_{\psi(j+1)}} \ge R_j \norm{\xx_{\psi(j)}}, \quad j\in\NN.
\]
This implies that the mapping $j\mapsto j+1$ is not translation relative to $\XB$ and we are done.
\end{proof}

Given $k\in\NN$, let $\rho_k\in\Upsilon(\NN,\NN)$ be defined by $\rho_k(\NN)=\NN\setminus\{k\}$. Note that if $(\xx_n)_{n=1}^\infty$ is spreading, then 
\begin{equation}\label{eq:Dif}
\enpar{T_{\rho_k}-T_{\rho_{k+1}}}(\xx_n)=\delta_{k,n}\enpar{\xx_{k+1}-\xx_k}, \quad k,n\in\NN.
\end{equation}

\begin{proposition}\label{prop:spreadbasic}
Let $\XB=(\xx_n)_{n=1}^\infty$ be a spreading sequence in a quasi-Banach space $\XX$. If $\XB$ is not constant, then it is a minimal system of its closed linear span. Besides, if $(\xx_n^*)_{n=1}^\infty$ are the coordinate functionals of $\XB$,
\[
\norm{\xx_{n+1}-\xx_n} \norm{\xx_n^*} = \norm{T_{\rho_n}-T_{\rho_{n+1}}}, \quad n\in\NN.
\]
\end{proposition}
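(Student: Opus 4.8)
The plan is to construct the coordinate functionals of $\XB$ explicitly from the rank-one operators $T_{\rho_n}-T_{\rho_{n+1}}$, which lets us avoid the Hahn--Banach theorem (unavailable for quasi-Banach spaces). Fix $n\in\NN$. First I would note that, since $\XB$ is not constant, Lemma~\ref{lem:SNSubSym} forces $\XB$ to be linearly independent; in particular $\xx_{n+1}-\xx_n\neq0$, so the algebraic coordinate functional $a_n\colon\spn(\XB)\to\FF$ sending $\sum_k c_k\xx_k$ to $c_n$ is well defined. Next, because $\XB$ is a spreading sequence, $\rho_n$ and $\rho_{n+1}$ are translations relative to $\XB$, so $T_{\rho_n}$ and $T_{\rho_{n+1}}$ are bounded operators from $\enbrak{\XB}$ into $\enbrak{\XB}$, and hence so is $D_n\coloneqq T_{\rho_n}-T_{\rho_{n+1}}$.

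The crucial step is to show that $D_n$ maps all of $\enbrak{\XB}$ into the one-dimensional subspace $\FF(\xx_{n+1}-\xx_n)$. By \eqref{eq:Dif}, $D_n(f)=a_n(f)\,(\xx_{n+1}-\xx_n)$ for every $f\in\spn(\XB)$, so $D_n(\spn(\XB))\subseteq\FF(\xx_{n+1}-\xx_n)$. That subspace is closed, because $c\mapsto c(\xx_{n+1}-\xx_n)$ is a topological isomorphism from $\FF$ onto it (its inverse has norm $1/\norm{\xx_{n+1}-\xx_n}$), hence it is complete; continuity of $D_n$ and density of $\spn(\XB)$ in $\enbrak{\XB}$ then give $D_n(\enbrak{\XB})\subseteq\FF(\xx_{n+1}-\xx_n)$. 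Consequently there is a well-defined linear map $\xx_n^*\colon\enbrak{\XB}\to\FF$ with
\[
D_n(f)=\xx_n^*(f)\,(\xx_{n+1}-\xx_n),\qquad f\in\enbrak{\XB}.
\]
The estimate $\abs{\xx_n^*(f)}\,\norm{\xx_{n+1}-\xx_n}=\norm{D_n(f)}\le\norm{D_n}\,\norm{f}$ shows $\xx_n^*$ is bounded, and \eqref{eq:Dif} yields $\xx_n^*(\xx_k)=\delta_{n,k}$. Letting $n$ range over $\NN$, the family $(\xx_n^*)_{n=1}^\infty\subseteq\enbrak{\XB}^*$ witnesses that $\XB$ is a minimal system of its closed linear span; since $\XB$ is complete in $\enbrak{\XB}$, these are its unique coordinate functionals.

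Finally, the norm identity is immediate from the displayed relation: for every $f\in\enbrak{\XB}$ one has $\norm{D_n(f)}=\abs{\xx_n^*(f)}\,\norm{\xx_{n+1}-\xx_n}$, so taking the supremum over the unit ball of $\enbrak{\XB}$ gives $\norm{T_{\rho_n}-T_{\rho_{n+1}}}=\norm{D_n}=\norm{\xx_n^*}\,\norm{\xx_{n+1}-\xx_n}$, as claimed. I expect the only genuinely delicate point to be the passage from $D_n(\spn(\XB))\subseteq\FF(\xx_{n+1}-\xx_n)$ to the same inclusion on all of $\enbrak{\XB}$ — this is what allows the coordinate functional to be defined at all — and it relies solely on the continuity of $D_n$ together with the (automatic) closedness of one-dimensional subspaces of a quasi-Banach space.
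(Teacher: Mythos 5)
Your proposal is correct and follows essentially the same route as the paper, which likewise obtains $\xx_k^*$ from the rank-one behaviour of $T_{\rho_k}-T_{\rho_{k+1}}$ recorded in \eqref{eq:Dif} and uses Lemma~\ref{lem:SNSubSym} to guarantee $\xx_{k+1}\neq\xx_k$. You merely make explicit the details the paper leaves implicit (closedness of the one-dimensional range, the density/continuity extension from $\spn(\XB)$ to $\enbrak{\XB}$, and the supremum computation giving the norm identity), all of which are handled correctly.
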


\begin{proof} 
Assume with loss of generality that $\enbrak{\XB}=\XX$. Fix $k\in\NN$. By \eqref{eq:Dif}, there is $\xx_k^*\in\XX^*$ such that 
\[
\enpar{T_{\rho_k}-T_{\rho_{k+1}}}(f)=\xx_k^*(f) \enpar{\xx_{k+1}-\xx_k}, \quad f\in\XX.
\]
Since $\xx_k\not=\xx_{k+1}$ by Lemma~\ref{lem:SNSubSym},  $\xx_k^*(\xx_n)=\delta_{k,n}$ for all $n\in\NN$.
\end{proof}

Proposition~\ref{prop:spreadbasic} exhibits a dichotomy regarding spreading sequences. Namely, they are either constant sequences or spreading bases. So, from now on, we state our results for spreading bases.

\begin{proposition}\label{prop:uniform-spread}
 Let $\XB=(\xx_n)_{n=1}^\infty$ be a spreading basis of a quasi-Banach space $\XX$. Then, there is a constant $C\in[1,\infty)$ such that $\XB$ is $C$-spreading.
\end{proposition}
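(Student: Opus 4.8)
The plan is to establish uniform boundedness of the family $\{T_\psi : \psi \in \Ot\}$ by a compactness/Baire-category argument combined with the structural facts already proved (seminormalization up to the dichotomy, M-boundedness, and the uniform separation of Lemma~\ref{lem:SNSubSymTris}). By Proposition~\ref{prop:spreadbasic} we may assume $\XB$ is a spreading basis of $\XX = \enbrak{\XB}$, hence a minimal system with coordinate functionals $(\xx_n^*)_{n=1}^\infty$; by Lemmas~\ref{lem:SNSubSymBis} and~\ref{lem:SNSubSymTris}, $\XB$ is bounded and uniformly separated, and by Proposition~\ref{prop:spreadbasic} the identity $\norm{\xx_{n+1}-\xx_n}\norm{\xx_n^*}=\norm{T_{\rho_n}-T_{\rho_{n+1}}}$ holds. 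A first reduction: since every $\psi \in \Ot$ factors (Lemma~\ref{lem:DRI}) as $\psi = \psi_{\psi(\Nt)} \circ \psi_\Nt^{-1}$ where $\psi_\Nt^{-1}$ is a restriction of a map in $\Upsilon(\NN,\NN)$ to a subset and $\psi_\Mt\in\Upsilon(\NN,\NN)$, it suffices to bound $\norm{T_\psi}$ uniformly over $\psi \in \Upsilon(\NN,\NN)$ together with the inverse translation operators $T_{\psi}^{-1}$; restrictions only decrease norms since $T_{\psi|_\Nt}$ is the restriction of $T_\psi$ to a closed subspace.

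The main step is a closed-graph / Baire argument on $\XX$ itself. For $\psi\in\Upsilon(\NN,\NN)$, define $T_\psi$ on $\spn(\XB)$ by $\xx_n \mapsto \xx_{\psi(n)}$; this is well-defined and extends to a bounded operator on $\XX$ by hypothesis, but a priori with no control on the norm. Consider the linear subspace $\Dt$ of the product $\prod_{\psi}\XX$ (or more concretely: fix $f\in\XX$ and look at $\sup_\psi \norm{T_\psi(f)}$). I would show first that for each fixed $f\in\XX$, the set $\{T_\psi(f) : \psi\in\Upsilon(\NN,\NN)\}$ is bounded. For $f\in\spn(\XB)$ with $\supp(f)\subseteq\{1,\dots,N\}$, the vector $T_\psi(f) = \sum_{n\le N}\xx_n^*(f)\,\xx_{\psi(n)}$ is a combination of the same scalars against an increasing selection of the $\xx_j$'s; here one uses that $\XB$ is bounded and that $T_{\psi'}$ for the finite increasing map $\psi'$ realizing the induced order-isomorphism onto $\psi(\{1,\dots,N\})$ is just the restriction of the single bounded operator $T_{\sigma}$ for a fixed $\sigma\in\Upsilon(\NN,\NN)$ extending it — but actually the cleanest route is: every finite increasing injection from $\{1,\dots,N\}$ into $\NN$ extends to a map in $\Upsilon(\NN,\NN)$, and the norm of $T_\psi$ restricted to $\enbrak{\xx_1,\dots,\xx_N}$ depends only on the \emph{order type}, hence there are only finitely many distinct such restricted operators up to the position of the gaps — no, that set is still infinite. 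So instead: use \eqref{eq:Dif}-type telescoping. Writing $\psi\in\Upsilon(\NN,\NN)$ as a (possibly infinite) composition of the elementary maps $\rho_k$ that delete a single coordinate does not terminate, so the honest tool is Baire category: for $M\in\NN$ let $F_M = \{f\in\XX : \norm{T_\psi(f)}\le M \text{ for all }\psi\in\Upsilon(\NN,\NN)\}$. Once we know each $T_\psi$ is bounded (given) and $\bigcup_M F_M = \XX$ (pointwise boundedness, to be verified on the dense set $\spn(\XB)$ and then everywhere), each $F_M$ is closed, so some $F_M$ has nonempty interior, and the uniform boundedness principle for quasi-Banach spaces (which holds since $\XX$ is complete and the $T_\psi$ are linear and continuous) yields $\sup_\psi\norm{T_\psi}<\infty$.

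The key obstacle is therefore verifying the \emph{pointwise} bound $\sup_{\psi\in\Upsilon(\NN,\NN)}\norm{T_\psi(f)}<\infty$ for $f\in\spn(\XB)$, since the general case follows from density plus equicontinuity on that dense set via a $3\varepsilon$ argument once equicontinuity is known — but equicontinuity is what we want, so we genuinely need the pointwise bound on all of $\XX$, not just on $\spn(\XB)$. To get it on $\spn(\XB)$: fix $f=\sum_{n=1}^N a_n\xx_n$. For $\psi\in\Upsilon(\NN,\NN)$, the restriction of $T_\psi$ to $\enbrak{\xx_1,\dots,\xx_N}$ equals the restriction of $T_{\psi_\Mt\circ\psi_\Nt^{-1}}$ for $\Nt=\{1,\dots,N\}$, $\Mt=\psi(\{1,\dots,N\})$; by Lemma~\ref{lem:SpreadingDiference}\ref{it:SubSeq} the block $(\xx_{\psi(1)},\dots,\xx_{\psi(N)})$ sits inside the spreading \emph{sequence} $(\xx_{\psi(n)})_{n=1}^\infty$, which is a subsequence of $\XB$ in order type; the crucial point is that for two increasing $N$-tuples with the \emph{same gaps pattern} the operators literally coincide (shift invariance: translating all indices by a fixed amount is realized by a single $\rho$-composition), while changing a gap from size $g$ to size $g+1$ changes the vector by the controlled quantity appearing in \eqref{eq:Dif}. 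Iterating, $\norm{T_\psi(f)-T_{\psi_0}(f)}$ for the ``canonical'' $\psi_0(n)=n$ is dominated by a quasi-norm sum of terms $|a_n|\,\norm{\xx_{k+1}-\xx_k}$ over finitely many $(n,k)$, and since $\sup_k\norm{\xx_{k+1}-\xx_k}<\infty$ (boundedness of $\XB$), this is bounded independently of $\psi$. This handles $\spn(\XB)$; then, as noted, we still need to promote pointwise boundedness to all of $\XX$ before invoking uniform boundedness — for that I would instead apply the uniform boundedness principle directly to the family restricted to the closed subspace where it is already known pointwise bounded, namely use that $F_M$ for the $\spn(\XB)$-pointwise bound contains a dense subspace, then a standard argument (each $T_\psi$ continuous, limits) shows $\bigcup_M \overline{F_M}=\XX$, and a Baire category / Banach--Steinhaus argument valid in complete metric linear spaces closes it out, giving the constant $C\in[1,\infty)$ with $\norm{T_\psi}\le C$ for all $\psi\in\Upsilon(\NN,\NN)$, hence (by the factorization and the companion bound for inverses, which follows symmetrically since $T_\psi^{-1}=T_{\psi^{-1}}$ for $\psi\in\Ot$ with infinite range) for all $\psi\in\Ot$.
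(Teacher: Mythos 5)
There is a genuine gap, and it sits exactly at the decisive step of your argument. Your plan is Banach--Steinhaus: show the family $\{L_\psi:\psi\in\Upsilon(\NN,\NN)\}$ is pointwise bounded and invoke the uniform boundedness principle (which indeed holds in complete metrizable spaces, so quasi-Banach spaces). Pointwise boundedness on $\spn(\XB)$ is fine --- in fact it is much easier than your telescoping discussion suggests: for fixed $f=\sum_{n=1}^N a_n\xx_n$, the vector $L_\psi(f)=\sum_{n=1}^N a_n\xx_{\psi(n)}$ is a sum of $N$ terms with $\norm{\xx_{\psi(n)}}$ uniformly bounded (Lemma~\ref{lem:SNSubSymBis}), so the quasi-triangle inequality with the constant depending only on the fixed $N$ gives a bound independent of $\psi$. (Your telescoping version, by contrast, is not sound as stated: moving the identity configuration to $\psi(1)<\dots<\psi(N)$ takes $\sum_n(\psi(n)-n)$ elementary gap-moves, which is unbounded over $\psi$, so the ``finitely many $(n,k)$'' sum is not uniformly finite.) The real problem is the promotion from $\spn(\XB)$ to $\XX$. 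Banach--Steinhaus needs pointwise boundedness on a nonmeager set; a proper dense subspace is meager, so pointwise boundedness on $\spn(\XB)$ alone proves nothing. Your fix --- ``$F_M$ contains a dense subspace, hence $\bigcup_M\overline{F_M}=\XX$, now apply Baire'' --- is vacuous: once you take closures you lose the defining bound, and you cannot conclude $\overline{F_M}\subset F_{M'}$ without equicontinuity of the family, which is precisely the statement to be proved. So the argument is circular at its crux. There is a second, independent problem: even granting $\sup_\psi\norm{L_\psi}<\infty$, the proposition also requires a uniform \emph{lower} bound, i.e.\ $\sup_\psi\norm{L_\psi^{-1}}<\infty$, and this does not ``follow symmetrically'': the operators $T_{\psi^{-1}}$ are defined on the varying subspaces $\enbrak{\xx_n:n\in\psi(\NN)}$, so no uniform boundedness principle on a single space applies, and lower bounds for a family of operators are not a pointwise-boundedness statement at all.

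These obstructions are exactly why the paper does not argue via Baire category. Its proof is a gliding-hump construction: assuming either $\sup_\psi\norm{L_\psi}=\infty$ or $\sup_\psi\norm{L_{\psi}^{-1}}=\infty$, it recursively chooses finitely supported vectors $f_k$ and maps $\psi_k\in\Upsilon(\NN,\NN)$ witnessing larger and larger distortion, replaces $f_k$ by the tails $g_k=f_k-S_{A_k}(f_k)$ (using M-boundedness, via Proposition~\ref{prop:spreadbasic} and Lemmas~\ref{lem:SNSubSymTris} and~\ref{lem:SNSubSymBis}, to control the chopped-off part), and then splices the maps $\psi_k$ on the disjoint supports into a single increasing map $\psi\in\Upsilon(\Nt,\NN)$ with $\Nt$ infinite. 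Spreadingness forces $T_\psi$ to be an isomorphic embedding, yet by construction $T_\psi(g_k)=L_{\psi_k}(g_k)$ is unbounded above or below relative to $\norm{g_k}$ --- a contradiction that handles the upper and lower bounds simultaneously. If you want to salvage your approach you would need a genuinely different mechanism to get pointwise boundedness on all of $\XX$ (not just on $\spn(\XB)$) and a separate treatment of the inverses; the gliding-hump route avoids both issues at once.
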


\begin{proof}
Let $(\xx_n^*)_{n=1}^\infty$ be the coordinate functionals of  $\XB$. Assume that $\XX$ is a $p$-Banach space, $0<p\le 1$. Use Lemma~\ref{lem:SNSubSymBis} to pick $K\in[1,\infty)$ such that
\[
\norm{\xx_n}\le K, \quad n\in\NN.
\]

Assume by contradiction that there is no $C\in[1,\infty)$ such that $\XB$ $C$-spreading.  Then, either
\begin{enumerate}[label=(\Alph*)]
    \item\label{lem:uniform-spread:A} 
    $\sup\enbrace{ \norm{L_\psi} \colon\psi\in\Upsilon(\NN,\NN)}=\infty,$ or
    \item\label{lem:uniform-spread:B} 
    $\sup\enbrace{ \norm{L_\psi^{-1}} \colon\psi\in\Upsilon(\NN,\NN)}=\infty$.
\end{enumerate}
Pick an unbounded sequence $(R_k)_{k=1}^\infty$ in $(1,\infty)$. We recursively construct $(f_k,\psi_k)_{k=1}^\infty$ in $\spn(\XB)\times \Upsilon(\NN,\NN)$ as follows. Set $f_0=0$, and let $\psi_0$ be the identity map. Fix $k\in\NN$ and assume that $(f_{k-1},\psi_{k-1})$ are constructed. Set, with the convention that $\max(\emptyset)=0$,
\begin{align*}
m_k&=\max\enpar{\supp(f_{k-1})}, \quad
M_k=\psi_{k-1}(m_k),\\
A_k&=\NN\cap[1,M_k],\quad D_k=\enpar{\sum_{n\in A_k} \norm{\xx_n^*}^p}^{1/p}.
\end{align*}
Then, choose $f_k\in\spn(\XB)$ and $\psi_k\in\Upsilon(\NN,\NN)$ such that
\[
\norm{f_k'}=D_k, \quad \norm{f_k''} > K D_k R_k ,
\]
where $f_k'=f_k$ and $f_k''=L_{\psi_k}(f_k)$ if \ref{lem:uniform-spread:A} holds, and $f_k'=L_{\psi_k}(f_k)$ and $f_k''=f_k$ if \ref{lem:uniform-spread:B} holds. 

Set $g_k=f_k-S_{A_k}(f_k)$ for all $k\in\NN$. Put $g_k'=g_k$  and $g_k''=L_{\psi_k}(g_k)$ if \ref{lem:uniform-spread:A} holds, and  $g_k'=L_{\psi_k}(g_k)$ and $g_k''=g_k$ if \ref{lem:uniform-spread:B} holds. We have
\[
\norm{g_k'}^p \le D_k^p + K^p D_k^p, \quad \norm{ g_k''}^p> K^p R_k^p D_k^p-K^p D_k^p.
\]
In particular, $g_k\not=0$. We have
\begin{align*}
 1+m_k\le 1+M_k \le &\min(\supp(g_k)), \\
 &\max(\supp(g_k))= \max(\supp(f_k))=m_{k+1}.
\end{align*}
Set $\Nt_k= [1+m_k, m_{k+1}]\cap \NN$ for all $n\in\NN$. If we define $\Nt= \cup_{k=1}^\infty \Nt_k$ and $\psi\colon \Nt\to \NN$ by 
\[
\restr{\psi}{\Nt_k}=\restr{\psi_k}{\Nt_k}, \quad k\in\NN,
\]
then $\psi\in\Upsilon(\Nt,\NN)$. 

On the one hand, $T_\psi$ is an isomorphic embedding. On the other hand, since $T_\psi(g_k)=L_{\psi_k}(g_k)$,
\[
\max \enbrace{ \frac{\norm{T_\psi(g_k)}}{\norm{g_k}}, \frac{\norm{g_k}}{\norm{T_\psi(g_k)}} }\ge  \frac{K}{\enpar{1+K^p}^{1/p}} \enpar{R_k^p-1}^{1/p}.
\]
This absurdity ends the proof. 
\end{proof}

\begin{corollary}\label{cor:SSMBounded}
Any spreading basis of any quasi-Banach space is semi-normalized and M-bounded.   
\end{corollary}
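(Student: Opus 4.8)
The plan is to deduce everything from Proposition~\ref{prop:uniform-spread} together with the earlier lemmas of the section. Let $\XB=(\xx_n)_{n=1}^\infty$ be a spreading basis of a quasi-Banach space $\XX$, with coordinate functionals $(\xx_n^*)_{n=1}^\infty$. Since a spreading basis is, by definition, not a constant sequence (constant sequences are not minimal systems of a nonzero span), Lemmas~\ref{lem:SNSubSymBis} and \ref{lem:SNSubSymTris} already give that $\XB$ is bounded and uniformly separated. So the only points that genuinely need the uniform spreading constant are the lower bound $\inf_n\norm{\xx_n}>0$ and the uniform boundedness of the one-dimensional coordinate projections $S_{\enbrace{n}}$, equivalently (by the formula recorded in item~(a) of the introduction) the uniform boundedness of $\norm{\xx_n}\norm{\xx_n^*}$.

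First I would fix $C\in[1,\infty)$ so that $\XB$ is $C$-spreading, as provided by Proposition~\ref{prop:uniform-spread}. For the lower bound on $\norm{\xx_n}$: by Lemma~\ref{lem:DRI} every map in $\Ot$ is a translation with $\norm{T_\psi}\le C$ and $\norm{T_\psi^{-1}}=\norm{T_{\psi^{-1}}}\le C$, so for any $n,m\in\NN$ there is a translation carrying $\xx_n$ to $\xx_m$; applying it and its inverse gives $C^{-1}\norm{\xx_m}\le\norm{\xx_n}\le C\norm{\xx_m}$. Hence $\sup_n\norm{\xx_n}$ and $\inf_n\norm{\xx_n}$ differ by at most a factor $C^2$, and since $\xx_1\neq 0$ (by linear independence, from Proposition~\ref{prop:spreadbasic}) we get $\inf_n\norm{\xx_n}\ge C^{-2}\norm{\xx_1}>0$; combined with boundedness, $\XB$ is semi-normalized.

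Next, for M-boundedness I would use the identity from Proposition~\ref{prop:spreadbasic},
\[
\norm{\xx_{n+1}-\xx_n}\,\norm{\xx_n^*}=\norm{T_{\rho_n}-T_{\rho_{n+1}}},\quad n\in\NN.
\]
Both $\rho_n$ and $\rho_{n+1}$ lie in $\Upsilon(\NN,\NN)$, so $\norm{T_{\rho_n}},\norm{T_{\rho_{n+1}}}\le C$, and in a $p$-Banach norm $\norm{T_{\rho_n}-T_{\rho_{n+1}}}\le (2C^p)^{1/p}$. Therefore
\[
\norm{\xx_n^*}\le\frac{(2C^p)^{1/p}}{\norm{\xx_{n+1}-\xx_n}},
\]
and since $\XB$ is uniformly separated, $\delta\coloneqq\inf_{n}\norm{\xx_{n+1}-\xx_n}>0$, giving $\sup_n\norm{\xx_n^*}\le (2C^p)^{1/p}/\delta<\infty$. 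Combined with $\sup_n\norm{\xx_n}<\infty$, the criterion recorded after Lemma~\ref{lem:DRI} in the introduction (``$\XB$ is semi-normalized and M-bounded iff $\sup_n\max\{\norm{\xx_n},\norm{\xx_n^*}\}<\infty$'') yields that $\XB$ is M-bounded, completing the proof. I do not anticipate a serious obstacle here: the real work was already done in Proposition~\ref{prop:uniform-spread}, and this corollary is a matter of assembling the uniform-spreading constant with the difference formula and the uniform-separation lemma; the only mild care needed is the $p$-convexity bookkeeping in estimating $\norm{T_{\rho_n}-T_{\rho_{n+1}}}$.
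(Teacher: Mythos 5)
Your proof is correct and follows essentially the same route as the paper: bound $\sup_n\norm{\xx_n^*}$ by combining the identity of Proposition~\ref{prop:spreadbasic} with the uniform spreading constant of Proposition~\ref{prop:uniform-spread} and the uniform separation of Lemma~\ref{lem:SNSubSymTris}, bound $\sup_n\norm{\xx_n}$ by Lemma~\ref{lem:SNSubSymBis}, and conclude via the criterion $\sup_n\max\{\norm{\xx_n},\norm{\xx_n^*}\}<\infty$. Your separate argument for $\inf_n\norm{\xx_n}>0$ via translations is fine but redundant, since biorthogonality and the boundedness of the coordinate functionals already yield it through that criterion.
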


\begin{proof}
Let $\XX$ be a quasi-Banach space. Let $\XB$ be spreading basis of $\XX$ with coordinate functionals $\XB^*$. Combining Proposition~\ref{prop:spreadbasic}, Lemma~\ref{lem:SNSubSymTris} and Proposition~\ref{prop:uniform-spread}, gives that $\XB^*$ is bounded. In turn, $\XB$ is bounded by Lemma~\ref{lem:SNSubSymBis}.
\end{proof}

The renormings of quasi-Banach spaces with spreading bases constructed in this paper rely on spreading models. Let us gather some basics on this topic. For each $N\in\NN$, we endow the set
\[
\enbrak{\NN}^{(N)}\coloneqq \Upsilon ( \NN\cap[1,N] , \NN)
\]
with the partial ordering given by $\alpha\le\beta$ if $\alpha(1)\le\beta(1)$. Note that $[\NN]^{(N)}$ is a directed set. Following \cite{BL1983}, we say that a sequence $\XB=(f_n)_{n=1}^\infty$ in a quasi-Banach space $\XX$ is \emph{good} if for all $N\in\NN$ and $\alpha\coloneqq(a_n)_{n=1}^N\in\FF^N$ there exists
\[
F_N(\alpha)\coloneqq \lim_{\varphi\in[\NN]^{(N)}}\norm{\sum_{j=1}^N a_j \, \xx_{\varphi(j)}} <\infty.
\]
Any good sequence is bounded. If $M< N$, $\alpha=(a_j)_{j=1}^M\in\FF^M$ and $\beta=(b_j)_{j=1}^N\in\FF^N$ are such $a_j=b_j$ for all $j=1$, \dots, $M$, and $b_j=0$ for all $j=M+1$, \dots,$N$, then $F_M(\alpha)=F_N(\beta)$. Hence, we can safely define
\[
\norm{\cdot}_\XB\colon c_{00} \to [0,\infty), \quad (a_n)_{n=1}^\infty \mapsto F_N\enpar{(a_n)_{n=1}^N},
\]
where $N\in\NN$ is such that $a_n=0$ for all $n\in\NN\cap[N+1,\infty)$. The map $\norm{\cdot}_\XB$ is a semi-quasi-norm on $ c_{00}$. If $\XX$ is a $p$-Banach space for some $p\in[0.1)$, then $\norm{\cdot}_\XB$ a semi-$p$-norm.

Let $\EB=(\ee_n)_{n=1}^\infty$ be the unit vector system of $\FF^\NN$ and, for each $n\in\NN$, let $\ee_n^*\colon\FF^\NN\to\FF$ be the $n$th coordinate operator. Given $\psi\in\Upsilon(\NN,\NN)$, let $T_\psi\colon\FF^\NN\to\FF^\NN$ be the shiting operator relative to $\EB$. We have $\norm{f}_\XB=\norm{L_\psi(f)}_\XB$ for all $f\in c_{00}$ and $\psi\in\Upsilon(\NN,\NN)$. Set 
\[
\nu[\XB] \coloneqq \norm{\ee_1-\ee_2}_\XB= \lim_{(j,k)\in[\NN]^{(2)}}\norm{e_j-e_k}.
\]
We have 
\[
\nu[\XB] \abs{\ee_n^*(f)} \le \norm{L_{\rho_n}-L_{\rho_{n+1}}}  \norm{f}_\XB, \quad f\in c_{00},
\]
(see Proposition~\ref{prop:spreadbasic}). Therefore, if $\nu[\XB]>0$ then $\norm{\cdot}_\XB$ is a quasi-norm, and $\EB$ is an isometrically spreading basis of the completion of $(c_{00}, \norm{\cdot}_\XB)$. We say that this completion, denoted by $\Sym[\XB]$, is the \emph{spreading model} associated with $\XB$. It is known that, given a good sequence $\XB$ with $\nu[\XB]>0$ in a Banach space $\XX$, then $\EB$ is an unconditional, whence subsymmetric, basis of $\Sym[\XB]$ if and only if $\XB$ is weakly null (see \cite{BL1983}). To the best of our knowledge, unconditional spreading models in locally nonconvex spaces are by now poorly understood.

By the Infinite Ramsey Theorem, every bounded sequence $\XB$ has a good subsequence $\YB$ (see \cite{BL1983}); besides, $\nu[\YB]>0$ provided that $\XB$ is uniformly separated. 

\begin{theorem}
Let $\XB$ be a spreading basis of a locally $p$-convex quasi-Banach space $\XX$, $0<p\le 1$. Then there is an equivalent $p$-norm for $\XX$ that makes $\XB$ isometrically spreading.
\end{theorem}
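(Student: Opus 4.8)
The plan is to transfer the isometric-spreading structure of the canonical basis of a spreading model of $\XB$ to $\XX$ by renorming. Note first that, in contrast with the unconditional, subsymmetric or symmetric settings, Lemma~\ref{lem:SG} is not directly at our disposal, because the translation operators associated with a spreading basis are only partially defined and do not assemble into a semigroup of endomorphisms of $\XX$. By the Aoki--Rolewicz theorem we may assume that the quasi-norm $\norm{\cdot}$ of $\XX$ is a $p$-norm. By Lemma~\ref{lem:SNSubSymBis} (or Corollary~\ref{cor:SSMBounded}), $\XB$ is bounded; being a complete minimal system it is linearly independent, hence not a constant sequence, so Lemma~\ref{lem:SNSubSymTris} shows it is uniformly separated; and by Proposition~\ref{prop:uniform-spread} there is $C\in[1,\infty)$ with $\XB$ being $C$-spreading. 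Since $\XB$ is bounded, the Infinite Ramsey Theorem (recalled right before the statement) gives a good subsequence $\YB=(\xx_{\sigma(n)})_{n=1}^\infty$, $\sigma\in\Upsilon(\NN,\NN)$, and, $\XB$ being uniformly separated, $\nu[\YB]>0$. Hence $\norm{\cdot}_\YB$ is a $p$-norm on $c_{00}$, the spreading model $\Sym[\YB]$ is a $p$-Banach space, and $\EB=(\ee_n)_{n=1}^\infty$ is an isometrically spreading basis of $\Sym[\YB]$.

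The core of the proof is a comparison of $\XX$ with $\Sym[\YB]$. Let $\Phi\colon\spn(\XB)\to c_{00}$ be the linear bijection with $\Phi(\xx_n)=\ee_n$. I claim that
\[
C^{-2}\norm{f}\le\norm{\Phi(f)}_\YB\le C^{2}\norm{f},\qquad f\in\spn(\XB).
\]
To prove this, fix $f=\sum_{j=1}^N a_j\,\xx_{m_j}$ with $m_1<\dots<m_N$ and set $u=\sum_{j=1}^N a_j\,\xx_j$. For any integers $n_1<\dots<n_N$, extending the assignment $j\mapsto n_j$ to a map $\psi\in\Upsilon(\NN,\NN)$ (increasingly on the tail), one has $\sum_{j=1}^N a_j\,\xx_{n_j}=T_\psi^{\XB}(u)$ and $u=(T_\psi^{\XB})^{-1}\bigl(\sum_{j=1}^N a_j\,\xx_{n_j}\bigr)$; since $\XB$ is $C$-spreading and $\Ot$ is closed under inversion, $\norm{T_\psi^{\XB}}\le C$ and $\norm{(T_\psi^{\XB})^{-1}}\le C$, whence $C^{-1}\norm{u}\le\norm{\sum_{j=1}^N a_j\,\xx_{n_j}}\le C\norm{u}$. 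Taking $n_j=m_j$ gives $C^{-1}\norm{u}\le\norm{f}\le C\norm{u}$. On the other hand, because $\EB$ is isometrically spreading in $\Sym[\YB]$,
\[
\norm{\Phi(f)}_\YB=\norm{\sum_{j=1}^N a_j\,\ee_j}_\YB=\lim_{\varphi\in[\NN]^{(N)}}\norm{\sum_{j=1}^N a_j\,\xx_{\sigma(\varphi(j))}},
\]
and taking $n_j=\sigma(\varphi(j))$ in the inequality above shows that every term of this limit lies in $[C^{-1}\norm{u},C\norm{u}]$, so $C^{-1}\norm{u}\le\norm{\Phi(f)}_\YB\le C\norm{u}$. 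Combining the two bounds proves the claim, so $\Phi$ is an isomorphic embedding. As $\XB$ is complete, $\overline{\spn(\XB)}=\XX$, and $\Phi(\spn(\XB))=c_{00}$ is dense in $\Sym[\YB]$; hence $\Phi$ extends to an isomorphism $\bar\Phi\colon\XX\to\Sym[\YB]$ with $\bar\Phi(\xx_n)=\ee_n$ for every $n$.

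To finish, define $\Nnorm{f}\coloneqq\norm{\bar\Phi(f)}_\YB$ for $f\in\XX$. Since the quasi-norm of $\Sym[\YB]$ is a $p$-norm and $\bar\Phi$ is a linear isomorphism, $\Nnorm{\cdot}$ is a $p$-norm, equivalent to $\norm{\cdot}$ by the estimate above; in particular $\XB$ remains a spreading basis with respect to $\Nnorm{\cdot}$. For $\psi\in\Ot$, the relation $\bar\Phi(\xx_{\psi(n)})=\ee_{\psi(n)}$ shows that $\bar\Phi$ maps the domain of $T_\psi^{\XB}$ onto that of $T_\psi^{\EB}$ and satisfies $\bar\Phi\circ T_\psi^{\XB}=T_\psi^{\EB}\circ\bar\Phi$ there; consequently, for $g$ in the domain of $T_\psi^{\XB}$,
\[
\Nnorm{T_\psi^{\XB}(g)}=\norm{T_\psi^{\EB}\bar\Phi(g)}_\YB\le\norm{\bar\Phi(g)}_\YB=\Nnorm{g},
\]
using that $\EB$ is isometrically spreading in $\Sym[\YB]$. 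Thus $\norm{T_\psi^{\XB}}\le1$ with respect to $\Nnorm{\cdot}$ for every $\psi\in\Ot$, i.e.\ $\XB$ is isometrically spreading under $\Nnorm{\cdot}$.

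I expect the main obstacle to be the central two-sided estimate of the second paragraph: one has to see that $C$-spreadingness really forces $\norm{f}$ to depend, up to the factor $C$, only on the ordered list of the nonzero coefficients of $f$---which requires the routine but slightly delicate extension of finite increasing assignments to members of $\Upsilon(\NN,\NN)$ (and the use of Lemma~\ref{lem:DRI} to recognize the inverse as a translation)---while simultaneously the limit defining $\norm{\cdot}_\YB$ must be kept trapped in the same band. The remaining ingredients---passing to a good subsequence, extending $\Phi$ by density, and the intertwining identity---are routine once Corollary~\ref{cor:SSMBounded}, Proposition~\ref{prop:uniform-spread}, and the basic facts about spreading models recalled above are available.
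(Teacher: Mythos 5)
Your proof is correct and follows essentially the same route as the paper: pass to a good subsequence, use uniform separation (Lemma~\ref{lem:SNSubSymTris}) to get $\nu>0$, use Proposition~\ref{prop:uniform-spread} to show the basis is equivalent to the canonical (isometrically spreading) basis of the spreading model, and pull back that $p$-norm. The only difference is cosmetic: the paper assumes without loss of generality that $\XB$ itself is good and compares norms via the shift operators $L_{\beta_m}$, whereas you keep the good subsequence explicit and prove the two-sided estimate directly from $C$-spreadingness, which if anything makes the ``passing to a subsequence'' step more transparent.
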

\begin{proof}
We can assume that $\XX$ is equipped with a $p$-norm. Passing to a subsequence, we can also assume that $\XB=(\xx_n)_{n=1}^\infty$ is good. Since $\XB$ is uniformly separated by Lemma~\ref{lem:SNSubSymTris}, $\nu[\XB]>0$. Hence, it suffices to prove that $\XB$ is equivalent to the canonical basis of $\Sym[\XB]$. Let $T\colon\spn(\XB)\to c_{00}$ be the linear map defined by $\xx_n\mapsto \ee_n$, $n\in\NN$. Define for each $m\in\NN$ $\beta_m\in\Upsilon(\NN,\NN)$ by 
\[
\beta_m(n)=m+n, \quad n\in\NN.
\]
The mere definition of $\norm{\cdot}_\XB$ gives, for all $f\in\spn(\XB)$,
\[
N_l(f)\coloneqq \liminf_m \norm{L_{\beta_m}(f)}=N_u(f)\coloneqq \limsup_m \norm{L_{\beta_m}(f)}=\norm{T(f)}_\XB.
\]
 Since Proposition~\ref{prop:uniform-spread} yields a constant $C$ such that
\[
\frac{1}{C} N_l(f) \le \norm{f}\le C N_u(f), \quad f\in\spn(\XB),
\]
we are done.
\end{proof}

We close this section by pointing out that conditional spreading Schauder bases do exist. A classic example arises within the study of convergent series. Let endow the linear space
\[
\cs=\enbrace{(a_n)_{n=1}^\infty \in\FF^\NN \colon \sum_{n=1}^\infty a_n \mbox{ converges} }
\]
with the norm
\[
\norm{(a_n)_{n=1}^\infty}=\max_{k\in\NN} \abs{\sum_{n=k}^\infty a_n}.
\]
Then, the \emph{unit vector basis} $\EB$  is an isometrically spreading Schauder basis of $\cs$. The obvious isomorphism
\[
T\colon \cs \to c_0, \quad (a_n)_{n=1}^\infty \mapsto \enpar{\sum_{n=k}^\infty a_n}_{k=1}^\infty
\]
transforms $\EB$ into the \emph{summing basis} 
\[
\enpar{\sum_{k=1}^n \ee_k}_{n=1}^\infty.
\]
The sequence of coordinate functionals of the summning basis of $c_0$ is the \emph{difference system} $\Dt=(\dd_n)_{n=1}^\infty$ of $\ell_1$ given by
\[
\dd_n=\ee_n-\ee_{n+1}, \quad n\in\NN.
\]
Note that $\Dt$ is weak*-closed in $\ell_1$, but fails to be norm-closed. Also note that $\Dt$ is not spreading. In fact, while $(\dd_{2n})_{n=1}^\infty$ is equivalent to the canonical $\ell_1$-basis, $\norm{\sum_{n=1}^m \dd_n}_1=2$ for all $m\in\NN$.

In contrast,  the coordinate functionals $\XB^*$ of a symmetric basis $\XB$ form a symmetric basis of $\enbrak{\XB^*}$. Indeed, given a shift $\psi\colon\Nt \subset\NN \to \NN$ relative to a complete minimal system $\XB=(\xx_n)_{n=1}^\infty$ with coordinate functionals $\XB^*=(\xx_n^*)_{n=1}^\infty$, the dual operator of the corresponding shifting operator  satisfies 
\[
\restr{L_\psi^*[\XB,\XX]}{\enbrak{\XB^*}}=L_{\psi^{-1}}[\XB^*,\enbrak{\XB^*}].
\]
Hence, $\XB^*$ is symmetric (resp., subsymmetric) provided that $\XB$ is symmetric (resp., subsymmetric). 

The summing basis of $c_0$ suggests a connection between the unconditionality of a spreading basis and the spreadingness of its dual basis. The following result substantiates this connection.

\begin{proposition}
Let $\XB$ be a Schauder spreading basis of a Banach space. Then $\XB^*$ is spreading if and only if $\XB$ is unconditional. 
\end{proposition}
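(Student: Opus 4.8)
The plan is to prove the two implications separately; one is immediate from duality for subsymmetric bases, the other rests on the spreading‑model technology built above.

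\emph{If $\XB$ is unconditional, then $\XB^*$ is spreading.} Being spreading and unconditional, $\XB$ is subsymmetric, and, as recorded just before the statement (the dual operator of a shifting operator is a shifting operator for the dual system, so the dual basis of a subsymmetric basis is subsymmetric), $\XB^*$ is a subsymmetric basis of $\enbrak{\XB^*}$; in particular it is spreading.

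\emph{If $\XB^*$ is spreading, then $\XB$ is unconditional.} I would argue by contraposition: assuming $\XB=(\xx_n)_{n=1}^\infty$ is conditional and --- for contradiction --- that $\XB^*$ is spreading, I would reach an absurdity. The argument hinges on the lemma: \emph{a conditional spreading basis $(\yy_n)_{n=1}^\infty$ of a Banach space satisfies $\varphi(m):=\norm{\sum_{n=1}^m\yy_n}\asymp m$.} To prove it, I would first show such a basis is not weakly null. By Corollary~\ref{cor:SSMBounded} and Lemma~\ref{lem:SNSubSymTris} it is seminormalized and uniformly separated, so, after passing to a good subsequence with positive $\nu$ (which changes nothing up to equivalence), the Brunel--Sucheston criterion recalled above would, if the basis were weakly null, force the canonical basis of its spreading model to be unconditional; but by the proof of the preceding Theorem the basis is equivalent to that canonical basis, so it would itself be unconditional, contrary to hypothesis. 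Hence the basis is not weakly null: fix $\phi$ in the dual and $n_1<n_2<\cdots$ with $\Re\phi(\yy_{n_k})\ge\varepsilon>0$ (after scaling $\phi$ and passing to a subsequence). By Proposition~\ref{prop:uniform-spread} there is $C$ with $\norm{\sum_{k=1}^m\yy_{n_k}}\le C\varphi(m)$, whereas $\abs{\phi(\sum_{k=1}^m\yy_{n_k})}\ge m\varepsilon$, so $\varphi(m)\ge\varepsilon m/(C\norm{\phi})$; with the trivial bound $\varphi(m)\le m\sup_n\norm{\yy_n}$ this gives $\varphi(m)\asymp m$.

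Given the lemma, I would finish as follows. A basis is conditional precisely when its dual basis is, and $\XB^*$ is --- by hypothesis --- a spreading basis of the Banach space $\enbrak{\XB^*}$, so the lemma applies to both $\XB$ and $\XB^*$ and yields $\varphi_\XB(m)\asymp m$ and $\varphi_{\XB^*}(m):=\norm{\sum_{n=1}^m\xx_n^*}\asymp m$. But the fundamental function of a Schauder basis times that of its dual basis is $O(m)$, with constant controlled by the basis constant (a classical estimate; for spreading bases it can also be extracted from uniform spreadingness via Proposition~\ref{prop:uniform-spread}). Then $m^2\asymp\varphi_\XB(m)\varphi_{\XB^*}(m)=O(m)$, which is absurd for large $m$; hence $\XB^*$ is not spreading. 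The main obstacle is the lemma --- more precisely the step that conditionality precludes weak nullity --- which is exactly where the spreading‑model machinery enters (the Brunel--Sucheston unconditionality criterion, together with the fact established above that a spreading basis is equivalent to the canonical basis of its own spreading model); the averaging inequality and the duality estimate $\varphi_\XB\varphi_{\XB^*}=O(m)$ are, by comparison, routine.
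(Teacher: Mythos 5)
Your easy direction coincides with the paper's. In the hard direction there is a genuine gap: the step ``the fundamental function of a Schauder basis times that of its dual basis is $O(m)$'' is not a classical fact, and it is false in the stated generality. The only general inequality runs the other way: $\norm{\sum_{n=1}^m \xx_n}\,\norm{\sum_{n=1}^m \xx_n^*}\ge \enpar{\sum_{n=1}^m \xx_n^*}\enpar{\sum_{n=1}^m \xx_n}=m$. For a counterexample to your upper bound, interleave the unit vector basis $(\yy_k)$ of $\ell_1$ with the difference basic sequence $\dd_k=\ee_k-\ee_{k+1}$ in $\ell_1$, i.e., take $\xx_{2k-1}=(\yy_k,0)$, $\xx_{2k}=(0,\dd_k)$ in $\ell_1\oplus_1\enbrak{\dd_k}$; this is a Schauder basis with small basis constant, yet $\norm{\sum_{n=1}^{2m}\xx_n}\asymp m$ (from the $\ell_1$ half) and $\norm{\sum_{n=1}^{2m}\xx_n^*}\asymp m$ (since $\sum_{k=1}^m\dd_k^*$ has norm $\asymp m$ on $\enbrak{\dd_k}$), so the product is $\asymp m^2$. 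Your parenthetical fallback, that for spreading bases the estimate ``can be extracted from uniform spreadingness via Proposition~\ref{prop:uniform-spread},'' is asserted without any argument, and it is not routine: the classical proof of $\varphi(m)\varphi^*(m)\lesssim m$ for subsymmetric bases averages over signs and permutations and uses unconditionality essentially, which is exactly what you cannot assume here. In the conditional spreading case the estimate would amount to showing that $\sup_m\norm{\sum_{n=1}^m\xx_n^*}<\infty$ (boundedness of the summing functional of a conditional spreading basis), a nontrivial structural fact that neither the paper nor your argument establishes; without it your contradiction $m^2\lesssim m$ does not follow.

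The preparatory material is sound: your lemma that a conditional spreading basis satisfies $\norm{\sum_{n=1}^m\xx_n}\asymp m$ (via Corollary~\ref{cor:SSMBounded}, Lemma~\ref{lem:SNSubSymTris}, the Brunel--Sucheston criterion quoted in Section~\ref{sect:Spreading}, and Proposition~\ref{prop:uniform-spread}) is correct, as is the reduction ``$\XB$ conditional iff $\XB^*$ conditional,'' which rests on $\XB\sim\XB^{**}$. But note that this last equivalence is precisely the ingredient the paper exploits directly and much more economically: assuming $\XB^*$ spreading, dualizing the shifting operators shows that every $\psi^{-1}$, $\psi\in\Upsilon(\NN,\NN)$, is a shift relative to $\XB^{**}\sim\XB$, so all maps in $\Ot$ give uniformly bounded shifts on $\XB$; hence the projections $S_\Nt$, $\Nt\in\Pt_\infty(\NN)$, and then $S_A=\Id_\XX-S_{\NN\setminus A}$, $A\in\Pt_{<\infty}(\NN)$, are uniformly bounded, i.e., $\XB$ is unconditional. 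If you want to salvage your route, you would have to supply a genuine proof of the $O(m)$ product estimate for spreading bases (or of the boundedness of the summing functional in the conditional case); as written, the proof is incomplete at its decisive step.
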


\begin{proof}
If $\XB$ is unconditional, then it is subsymmetric. Hence, $\XB^*$ is subsymmetric. In particular, $\XB^*$ is spreading. 

Assume that $\XB^*$ is spreading. Then, $\psi^{-1}$ is shift relative to $\XB^{**}$ for all $\psi\in\Upsilon(\NN,\NN)$. Since $\XB^{**}$ is equivalent to $\XB$ (see \cite{AlbiacKalton2016}*{Corollary 3.2.4.}), we infer from Lemma~\ref{lem:DRI} that any map in $\Ot$ is a shift relative to $\XB$, and $(L_\varphi)_{\varphi\in\Ot}$ is a uniformly bounded family of operators. In particular, any set in  $\Pt_\infty(\NN)$ is a character relative to $\XB$, and 
$(S_\Nt)_{\Nt\in\Pt_\infty(\NN)}$ is a uniformly bounded family of operators. Consequently,
\[
(S_A)_{A\in\Pt_{<\infty}(\NN)}=(\Id_\XX -S_\Nt)_{\Nt\in\Pt_\infty(\NN)}
\]
is also a uniformly bounded family of operators.
\end{proof}
\section{Renorming quasi-Banach spaces with symmetric bases}\noindent
Similarly to spreading sequences, we say that a sequence  $\XB=(\xx_n)_{n=1}^\infty$ in a quasi-Banach space $\XX$ is \emph{symmetric} if every $\psi\in\Pi(\NN)$ is a translation relative to $\XB$. If $C\in[1,\infty)$ is such that $\norm{L_\psi}\le C$ for all $\psi\in\Pi(\NN)$, $\XB$ is said to be \emph{$C$-symmetric}. A $1$-symmetric basis will be called \emph{isometrically symmetric}. 

The tools used to study spreading sequences could be adapted for symmetric sequences. However, we propose an alternative approach. Specifically, we will do our best to make the most of the results achieved in Section~\ref{sect:Spreading}.

Given a set $\Mt$, $\Pt_{\infty,\infty}(\Mt)$ stands for the sets of all $A\in\Pt_\infty(\Mt)$ such that $\Mt\setminus A$ is infinite. Given another set $\Nt$, we denote by $\Gamma_\infty(\Nt,\Mt)$ the set of all $\psi\in\Gamma(\Nt,\Mt)$ such that $\Mt\setminus\psi(\Nt)\in\Pt_\infty(\Mt)$.

\begin{proposition}\label{prop:sym}
Let $\XB=(\xx_n)_{n=1}^\infty$ be a symmetric sequence in a quasi-Banach space $\XX$.
\begin{enumerate}[label=(\roman*),leftmargin=*,widest=ii]
\item 
For each $\Nt\in\Pt(\NN)$, every $\psi\in\Gamma(\Nt,\NN)$ is a translation relative to $\XB$.
\item $\XB$ is spreading.
\end{enumerate}
\end{proposition}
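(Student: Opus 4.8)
The goal is to prove that a symmetric sequence $\XB=(\xx_n)_{n=1}^\infty$ satisfies: (i) every $\psi\in\Gamma(\Nt,\NN)$, for any $\Nt\in\Pt(\NN)$, is a translation relative to $\XB$; and (ii) $\XB$ is spreading. Since (ii) is an immediate consequence of (i) (every $\psi\in\Upsilon(\NN,\NN)$ lies in $\Gamma(\NN,\NN)$, and its inverse $\psi^{-1}\in\Gamma(\psi(\NN),\NN)$ is then a translation too, so $T_\psi$ is an isomorphism onto its range), the real content is (i). My plan is to realize an arbitrary $\psi\in\Gamma(\Nt,\NN)$ as the restriction of a permutation of $\NN$, up to first passing to a sufficiently sparse sublattice so that both the domain and the range leave room to spare.

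**The key steps.**
First I would handle the generic case where both $\NN\setminus\Nt$ and $\NN\setminus\psi(\Nt)$ are infinite, i.e.\ $\psi\in\Gamma_\infty(\Nt,\NN)$ with infinite complement of the domain as well. In that case there is a bijection $\sigma\colon\NN\setminus\Nt\to\NN\setminus\psi(\Nt)$, and $\varphi\coloneqq\psi\cup\sigma$ is a genuine permutation of $\NN$ extending $\psi$. Since $\XB$ is symmetric, $\varphi\in\Pi(\NN)$ is a translation, so $L_\varphi$ is an isomorphism of $\XX$; restricting $L_\varphi$ to $\enbrak{\xx_n\colon n\in\Nt}$ gives an isomorphism onto $\enbrak{\xx_n\colon n\in\psi(\Nt)}$ sending $\xx_n\mapsto\xx_{\psi(n)}$, which is exactly the assertion that $\psi$ is a translation. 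Second, to reduce the general case to this one, given an arbitrary $\psi\in\Gamma(\Nt,\NN)$ I would choose an infinite $\Mt\subseteq\Nt$ with $\Nt\setminus\Mt$ infinite and with $\psi(\Mt)$ co-infinite in $\NN$ (this is always possible: split $\Nt$ into two infinite pieces, keep the one whose image is co-infinite, or if $\psi(\Nt)$ is itself co-infinite just thin $\Nt$ keeping it infinite). Then $\restr{\psi}{\Mt}\in\Gamma_\infty(\Mt,\NN)$ is a translation by the first step. To propagate translationhood from the sublattice indexed by $\Mt$ back to the full $\Nt$: note $\Mt$ and $\Nt$ are both infinite subsets of $\NN$, so by Lemma~\ref{lem:DRI} there is $\eta\in\Upsilon(\Nt,\NN)$ with $\eta(\Nt)=\Mt$; since $\XB$ is symmetric it is in particular spreading — wait, that is what we are proving, so instead I would argue directly that $\eta\in\Gamma(\Nt,\NN)$ and, being increasing with co-infinite image, falls under a mild variant of the first step, hence is a translation. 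Composing $T_{\psi}=T_{\psi\circ\eta^{-1}}\circ T_\eta$ on $\enbrak{\xx_n\colon n\in\Nt}$, where $\psi\circ\eta^{-1}\in\Gamma(\Mt,\NN)$ is covered by the first step, shows $\psi$ is a translation.

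**Where the difficulty lies.**
The only delicate point is the bookkeeping to guarantee that after thinning we land in the truly generic situation $\Gamma_\infty(\Mt,\NN)$ with $\NN\setminus\Mt$ also infinite, so that the extend-to-a-permutation trick applies cleanly; one must be careful that $\psi$ need not be injective-with-infinite-complement or surjective, and that $\Nt$ itself could be cofinite or even all of $\NN$. A clean way to organize this is: (a) prove the statement first for $\psi\in\Gamma_\infty(\Nt,\NN)$ with $\NN\setminus\Nt\in\Pt_\infty(\NN)$ via the permutation extension; (b) prove it for all $\psi\in\Gamma(\Nt,\NN)$ by precomposing with a suitable increasing injection $\Nt\to\Nt$ of co-infinite range to shrink the domain, and postcomposing considerations to shrink the range, using that translations are closed under composition and inversion. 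I expect step (b) — the careful choice of the intermediate infinite set and verifying the complements are infinite on both sides — to be the main obstacle, though it is entirely elementary; the conceptual heart, the permutation-extension argument in step (a), is short.
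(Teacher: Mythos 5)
You are right that (ii) reduces to (i), and your step (a) --- extending $\psi$ to a permutation of $\NN$ when both $\NN\setminus\Nt$ and $\NN\setminus\psi(\Nt)$ are infinite --- is correct and coincides with the paper's starting point (Claim A of its proof). The gap is in your step (b), and it is not bookkeeping: it is the whole content of the proposition. The map $\eta\in\Upsilon(\Nt,\NN)$ with $\eta(\Nt)=\Mt$ that you use to pass from $\Mt$ back to $\Nt$ has domain $\Nt$; in the hard cases $\Nt$ is cofinite (for instance $\Nt=\NN$, $\Mt=2\NN$, $\eta(n)=2n$), so its domain has finite complement while its range has infinite complement, and such a map cannot be extended to a permutation of $\NN$. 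It therefore does not fall under any ``mild variant'' of step (a); asserting that it is a translation is asserting precisely that a symmetric sequence is spreading, i.e., the statement to be proved. Moreover, even granting $\eta$, the map $\psi\circ\eta^{-1}$ has image $\psi(\eta^{-1}(\Mt))=\psi(\Nt)$, not $\psi(\Mt)$; if $\psi(\Nt)$ has finite complement (e.g.\ $\Nt=2\NN$ and $\psi(2n)=n$), it again escapes step (a). Precomposing with a bijection of the domain never shrinks the image, so no thinning of $\Nt$ reduces the general case to the doubly co-infinite one.

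A purely combinatorial reduction cannot work because symmetry is assumed only qualitatively: each $L_\pi$, $\pi\in\Pi(\NN)$, is an isomorphism, but no uniform bound on $\norm{L_\pi}$ is available at this stage (uniform symmetry is obtained only later, in Proposition~\ref{prop:SymUniform}, as a consequence of this proposition). Hence, although every finitely supported vector can be carried onto its spread copy by some permutation, one cannot conclude that the linear map $\xx_n\mapsto\xx_{2n}$ is bounded on $\spn(\XB)$, nor that its inverse is. This is exactly what the paper's proof works to establish: it proves linear independence, constructs functionals separating $\xx_j$ from $\xx_k$, shows $\xx_k\notin\enbrak{\xx_n\colon n\in\Nt}$ when $k\notin\Nt$ and both $\Nt$ and its complement are infinite (a step that already uses Corollary~\ref{cor:SSMBounded}, i.e., semi-normalization and M-boundedness of spreading bases), deduces that the family of good index sets is stable under adding finite sets, and finally runs a gliding-hump contradiction argument: if some cofinite $\Nt$ admitted a non-translation in $\Gamma_\infty(\Nt,\NN)$, one could glue infinitely many badly behaved finitely supported vectors into a single map of doubly co-infinite type, contradicting step (a). Your proposal needs an argument of this uniform-boundedness/gluing type; the composition trick alone cannot supply it.
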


\begin{proof}
Assume that $\XB$ is not constant and that $\enbrak{\XB}=\XX$. Denote by $\St$ the set consisting of all $\Nt\in\Pt_\infty(\NN)$ such that every map in $\Gamma_\infty(\Nt,\NN)$ is a translation relative to $\XB$. We structure the proof as a series of statements, each building on the previous ones.

\begin{claim}\label{cl:A}
If $\Nt\in\Pt_{\infty,\infty}(\NN)$, then $\Nt\in\St$.
\end{claim}

Indeed, given $\psi\in\Gamma_\infty(\Nt,\NN)$, there is $\pi\in\Pi(\NN)$ such that $\restr{\pi}{\Nt}=\psi$.

\begin{claim}\label{cl:B}
If $\Nt\in\Pt_{\infty,\infty}(\NN)$, then any arrangement of $(\xx_n)_{n\in \Nt}$ is spreading.
\end{claim}

Indeed, since $\Gamma(\Nt,\Nt)\subset\Gamma_\infty(\Nt,\NN)$, the statement follows from Claim~\ref{cl:A}.

\begin{claim}\label{cl:Z}
Let $\Nt\in\Pt_\infty(\NN)$. Then, $\Nt\in\St$ if and only if there is $\varphi\in\Gamma_\infty(\Nt,\NN)$ that is a translation relative to $\XB$.
\end{claim}

Indeed, given $\psi \in\Gamma_\infty(\Nt,\NN)$, $\Mt:=\psi(\Nt)\in \St$ by Claim~\ref{cl:A}. Hence, the map $\pi\in\Upsilon(\Mt,\NN)$ defined by $\pi(\Mt)=\varphi(\Nt)$ is a translation. Since $\psi=\pi^{-1} \circ \varphi$, $\psi$ is a translation relative to $\XB$.

\begin{claim}\label{cl:C}
$\XB$ is linearly independent.
\end{claim}

Indeed, by Lemma~\ref{lem:SNSubSym} and Claim~\ref{cl:B}, $(\xx_n)_{n\in F}$ is linearly independent for all $F\in\Pt_{<\infty}(\NN)$.

\begin{claim}\label{cl:D}  
For each $j$, $k\in\NN$ with $j\not=k$ there is $f^*\in\XX^*$ such that $f^*(\xx_j)=1$, $f^*(\xx_k)=-1$, and $f^*(\xx_n)a=0$ for all $n\in\NN\setminus\{j,k\}$.
\end{claim}

Indeed, if $\psi\in\Pi(\NN)$ is the cycle that swaps the indices $j$ and $k$, then the operator $S\coloneqq\Id_\XX-T_\psi$ satisfies $S(\xx_n)=0$ for all $n\in\NN\setminus\{j,k\}$, $S(\xx_j)=\xx_j-\xx_k$, and $S(\xx_k)=\xx_k-\xx_j$. Hence, there is $f^*\in\XX^*$ such that $S(f)=f^*(f)(\xx_j-\xx_k)$ for all $f\in\XX$. Since $\xx_k\not=\xx_j$ by Claim~\ref{cl:C}, $f^*$ satisfies the desired proporties.

\begin{claim}\label{cl:E}
Given $\Nt\in\Pt_{\infty,\infty}(\NN)$ and $k\in\NN\setminus\Nt$, then $\xx_k\notin \YY\coloneqq\enbrak{\xx_n \colon n\in\Nt}$.
\end{claim}

Indeed, combining Claim~\ref{cl:B} with Corollary~\ref{cor:SSMBounded} yields a bounded sequence $(\yy_n^*)_{n\in \Nt}$ in $\YY^*$ such that $(\xx_n,\yy_n^*)_{n\in \Nt}$ is a biorthogonal system. By Claim~\ref{cl:C} and Claim~\ref{cl:D}, for each $n\in \Nt$, there is $f_n^*\in\XX^*$ such that $\restr{f_n^*}{\YY}=\yy_n^*$ and $f_n^*(\xx_k)=-1$. Suppose that $\xx_k\in\YY$. On the one hand, $\yy_n^*(\xx_k)=-1$ for all $n\in\Nt$. On the one hand, 
\[
\lim_{n\in \Nt} \yy_n^*(\xx_k)=0.
\]
This absurdity proves the claim.

\begin{claim}\label{cl:F}
If $\Nt\in\St$ and $F\in\Pt_{<\infty}(\NN)$, then $F\cup\Nt\in\St$.
\end{claim}

It suffices to see this statement in the case when $F$ is a singleton. Pick $k\in\NN\setminus\Nt$ and $\psi\in \Gamma_\infty(\Nt\cup\{k\},\NN)$. Since $\varphi\coloneqq\restr{\psi}{\Nt}\in \Gamma_\infty(\Nt,\NN)$, it is a translation relative to $\XB$. Besides, $\psi(k)\notin \varphi(\Nt)$, so by Claim~\ref{cl:E} the operator $T_\varphi$ extends to an isomorphism
\[
T\colon \enbrak{\xx_n \colon n\in \Nt\cup\{k\}}\to  \enbrak{\xx_n \colon n\in \varphi(\Nt)\cup\{\psi(k)\}}
\]
such that $T(\xx_k)=\xx_{\psi(k)}$.

\begin{claim}\label{cl:W}
$\Pt_\infty(\NN)\subset\St$.
\end{claim}

Assume by contradiction that there is $\Mt\in\Pt_\infty(\NN)\setminus\St$. Then, by Claim~\ref{cl:F}, $\Mt\setminus F\notin\St$ for all $F\in\Pt_{<\infty}(\Mt)$. Pick an unbounded sequence $(R_n)_{n=1}^\infty$ in $(0,\infty)$. Arrange the elements of $\Mt$ by means of a bijection $\nu\colon\NN\to\Mt$. We recursively construct for each $k\in\NN$ $n_k\in\NN$, $F_k$, $G_k\in \Pt_{<\infty}(\NN)$, $f_k$, $g_k\in\spn(\XB)$ and $A_k\in \Pt_{\infty,\infty}(\NN)$ as follows. Define $F_0=\emptyset$ and choose $A_0\in\Pt_{\infty,\infty}(\NN)$. Pick $j\in\NN$ and assume that if $j\ge 2$ $(n_k, G_k,f_k,g_k,F_k,A_k)_{k=1}^{j-1}$ are constructed. Set 
\[
n_j=j+\max(F_{j-1}), \quad G_j=F_{j-1} \cup\{n_j\}.
\]
Let $\psi_j\in\Gamma(\Mt\setminus \nu(G_j), \NN\setminus A_{j-1})$. By Claim~\ref{cl:Z}, $\psi_j$ is not a translation relative $\XB$, whence
there is
$B_j\in\Pt_{<\infty}(\Mt\setminus \nu(G_j))$ and $a_{j,n}\in\FF$ for each $n\in B_j$  such that
\[
\max\enbrace{ \frac{\norm{g_j}}{\norm{f_j}}, \frac{\norm{f_j}}{\norm{g_j}}}\ge R_j,
\]
where $f_j=\sum_{n\in B_j} a_{j,n}\, \xx_n$ and $g_j=\sum_{n\in B_j} a_{j,n}\, \xx_{\psi_j(n)}$. Set $F_j=F_{j-1}\cup \nu^{-1}(B_j)$ and $A_j=A_{j-1}\cup \psi_j(B_j)$.

Put $\Nt=\cup_{k=1}^\infty B_k$. Since $(B_k)_{k=1}^\infty$ is pairwise disjoint, there is $\psi\colon\Nt\to\NN$ such that $\restr{\psi}{B_k}=\restr{\psi_k}{B_k}$. The sequence $(n_k)_{k=1}^\infty$ is increasing and 
\[
n_k\notin \cup_{j=1}^\infty F_j, \quad k\in\NN,
\]
so it follows that $\Nt\in\Pt_{\infty,\infty}(\NN)$. Furthermore, $A_{0}\cap \psi_k(B_k)=\emptyset$ for all $k\in\NN$, so $\psi\in\Gamma_\infty(\Nt,\NN)$. By Claim~\ref{cl:A}, $\psi$ is a translation relative to $\XB$. Since $T_\psi(f_k)=g_k$ for all $j\in\NN$, we arise to contradiction. Thus, Claim~\ref{cl:W} holds.

We are now ready to complete the proof. Fix $\Mt\in\Pt_{\infty,\infty}(\NN)$. Pick $\Nt\in\Pt_\infty(\NN)$ and $\psi\in\Gamma(\Nt,\NN)$. Choose a bijection $\varphi\colon\psi(\Nt)\to\Mt$. By Claim~\ref{cl:W}, $\varphi$ and $\varphi\circ \psi$ are translations relative to $\XB$. Therefore, $\psi$ is a translation relative to $\XB$.
\end{proof}
   
\begin{corollary}\label{cor:SymBounded}
Let $\XB$ be a symmetric sequence in a quasi-Banach space $\XX$. If $\XB$ is not constant, then it is an M-bounded semi-normalized minimal system of its closed linear span.
\end{corollary}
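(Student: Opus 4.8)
The plan is to reduce the statement entirely to the results already proved for spreading sequences in Section~\ref{sect:Spreading}, so the proof should be short. Assume $\XB=(\xx_n)_{n=1}^\infty$ is symmetric and not constant. The first step is to invoke Proposition~\ref{prop:sym}, which tells us that $\XB$ is a spreading sequence (indeed every $\psi\in\Gamma(\Nt,\NN)$ is a translation relative to $\XB$, but for the present statement we only need that maps in $\Upsilon(\NN,\NN)$ are translations).

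The second step is to apply Proposition~\ref{prop:spreadbasic}: since $\XB$ is a spreading sequence that is not constant, $\XB$ is a minimal system of its closed linear span $\enbrak{\XB}$. By the dichotomy recorded immediately after that proposition, $\XB$ is then a spreading basis of the quasi-Banach space $\enbrak{\XB}$ (a closed subspace of $\XX$, hence itself a quasi-Banach space, in which $\XB$ is complete by construction, minimal by Proposition~\ref{prop:spreadbasic}, and spreading by the first step). The third step is simply to feed this spreading basis into Corollary~\ref{cor:SSMBounded}, applied with ambient space $\enbrak{\XB}$, which yields at once that $\XB$ is semi-normalized and M-bounded. Assembling the three steps gives that $\XB$ is an M-bounded semi-normalized minimal system of $\enbrak{\XB}$, which is the assertion.

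There is essentially no hard step here: the corollary is a repackaging of Proposition~\ref{prop:sym}, Proposition~\ref{prop:spreadbasic}, and Corollary~\ref{cor:SSMBounded}. The only point deserving a line of care is that the conclusions of Corollary~\ref{cor:SSMBounded} — semi-normalization of $\XB$ and boundedness of its coordinate functionals — are intrinsic to $\enbrak{\XB}$ and do not change when $\XB$ is regarded inside the possibly larger space $\XX$, so no compatibility issue arises from having switched ambient spaces along the way.
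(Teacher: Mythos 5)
Your proposal is correct and follows exactly the paper's route: the paper's proof is simply ``combine Proposition~\ref{prop:sym} with Corollary~\ref{cor:SSMBounded}'', with the intermediate passage through Proposition~\ref{prop:spreadbasic} (to see that the non-constant spreading sequence is a spreading basis of its closed linear span) left implicit. Your spelling out of that step, and of the harmlessness of switching the ambient space to $\enbrak{\XB}$, is a faithful expansion of the same argument, not a different one.
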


\begin{proof}
Just combine Proposition~\ref{prop:sym} with Corollary~\ref{cor:SSMBounded}.
\end{proof}

We prove the quantitative version of Proposition~\ref{prop:sym} following the steps of the corresponding one in the unconditional case (see, e.g., \cite{LinTza1977}*{Proposition 3.a.3}).
\begin{proposition}
 Let $C\in[1,\infty)$ and $\XB$ be a $C$-symmetric basis of a quasi-Banach space $\XX$. Then, $\XB$ is $C$-spreading.   
\end{proposition}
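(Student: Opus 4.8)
The plan is to show that for any $\psi\in\Upsilon(\NN,\NN)$, both $\norm{L_\psi}\le C$ and $\norm{L_\psi^{-1}}\le C$, which is what $C$-spreading means. The first bound is immediate: by Proposition~\ref{prop:sym}, $\psi$ is a translation, so $L_\psi$ makes sense; to estimate its norm, fix $f\in\spn(\XB)$, say $f=\sum_{n\in B}a_n\xx_n$ with $B\in\Pt_{<\infty}(\NN)$, and observe that $\psi$ restricted to $B$ agrees with some permutation $\pi\in\Pi(\NN)$ (extend the one-to-one map $\restr{\psi}{B}\colon B\to\psi(B)$ to a bijection of $\NN$, using that both $B$ and $\psi(B)$ are finite). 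Then $L_\psi(f)=L_\pi(f)$, so $\norm{L_\psi(f)}=\norm{L_\pi(f)}\le C\norm{f}$. Passing to the closure gives $\norm{L_\psi}\le C$.

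The heart of the matter is the reverse bound, $\norm{L_\psi^{-1}}\le C$, equivalently $\norm{f}\le C\norm{L_\psi(f)}$ for $f\in\spn(\XB)$. Here $\psi$ need not be onto, so one cannot simply invoke symmetry of an inverse permutation. The idea, following the unconditional template, is to approximate: given $f=\sum_{n\in B}a_n\xx_n$ with $B\in\Pt_{<\infty}(\NN)$, set $g=L_\psi(f)=\sum_{n\in B}a_n\xx_{\psi(n)}$, supported on the finite set $\psi(B)$. We want to ``undo'' $\psi$ by a genuine permutation. Pick a permutation $\pi\in\Pi(\NN)$ with $\restr{\pi}{\psi(B)}=\restr{\psi^{-1}}{\psi(B)}$ — again possible since $\psi(B)$ and $B$ are both finite — so that $L_\pi(g)=\sum_{n\in B}a_n\xx_n=f$. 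Then $\norm{f}=\norm{L_\pi(g)}\le C\norm{g}=C\norm{L_\psi(f)}$, as desired. Taking closures yields $\norm{L_\psi^{-1}}\le C$.

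Combining the two estimates, $\max\{\norm{L_\psi},\norm{L_\psi^{-1}}\}\le C$ for every $\psi\in\Upsilon(\NN,\NN)$; by Lemma~\ref{lem:DRI}, every map in $\Ot$ is a composition $\psi_\Mt\circ\psi_\Nt^{-1}$ of such maps and their inverses, so $\norm{T_\psi}\le C$ for all $\psi\in\Ot$ follows — wait, that would give $C^2$, not $C$. To get the sharp constant $C$ one argues directly instead: for arbitrary $\psi\in\Ot$ with domain $\Nt$ and $f=\sum_{n\in B}a_n\xx_n$, $B\in\Pt_{<\infty}(\Nt)$, choose $\pi\in\Pi(\NN)$ with $\restr{\pi}{B}=\restr{\psi}{B}$ (finite domain and finite range) so $L_\psi(f)=L_\pi(f)$ and $\norm{L_\psi(f)}\le C\norm{f}$ directly. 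Thus the single bound $\norm{T_\psi}\le C$ holds uniformly over $\Ot$, which is exactly the definition of $C$-spreading.

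The main obstacle — really the only subtle point — is the direction handling of non-surjective increasing maps: one must be careful that the permutation $\pi$ extending a finite piece of $\psi$ (or of $\psi^{-1}$) exists, which hinges on the elementary but essential observation that for $f\in\spn(\XB)$ the relevant index sets $B$ and $\psi(B)$ are finite, so any injection between them extends to a permutation of $\NN$; and one must verify that the translation operator $L_\psi$ produced by Proposition~\ref{prop:sym} genuinely acts as $\xx_n\mapsto\xx_{\psi(n)}$ on these finitely supported vectors, which is built into its definition. Density of $\spn(\XB)$ in $\enbrak{\XB}$ then transfers all the estimates from the span to the whole space, and no quantitative loss is incurred because each inequality is established for a fixed finitely supported vector before taking limits.
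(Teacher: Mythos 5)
Your proposal is correct and, in substance, is the paper's own proof: the argument you land on at the end is exactly what the paper does, namely for an arbitrary injection $\psi$ and finitely supported $f$ one extends $\restr{\psi}{\supp(f)}$ to a permutation $\varphi\in\Pi(\NN)$, so that $T_\psi(f)=L_\varphi(f)$ and $\norm{T_\psi(f)}\le C\norm{f}$, and then passes to the closure. Your preliminary detour through $\norm{L_\psi}$ and $\norm{L_\psi^{-1}}$ for $\psi\in\Upsilon(\NN,\NN)$ is harmless but redundant (and, as you noticed, combining those via Lemma~\ref{lem:DRI} would only yield $C^2$), since the direct extension argument already handles every map in $\Ot$ with the sharp constant.
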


\begin{proof}
Pick $\Nt\in\Pt(\NN)$ and $\psi\in\Gamma(\Nt,\NN)$. Pick $f\in \spn(\xx_n \colon n\in\Nt)$. There is $\varphi\in\Pi(\NN)$ such that $\restr{\psi}{\supp(f)}=\restr{\varphi}{\supp(f)}$. Since $T_\psi(f)=L_\varphi(f)$, $\norm{T_\psi(f)} \le C \norm{f}$.
\end{proof}

We go on with the symmetric counterpart of Proposition~\ref{prop:uniform-spread}.

\begin{proposition}\label{prop:SymUniform}
Let $\XB$ be a symmetric basis of a quasi-Banach space $\XX$.  Then,  $\XB$ is $C$-symmetric for some $C\in[1,\infty)$.
\end{proposition}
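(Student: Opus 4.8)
The plan is to mimic the structure of the proof of Proposition~\ref{prop:uniform-spread}, but to exploit the extra freedom that permutations give us — in particular, the availability of maps with infinite complement in their range. First I would reduce to the case $\enbrak{\XB} = \XX$ and invoke Corollary~\ref{cor:SymBounded} to fix $K \in [1,\infty)$ with $\norm{\xx_n} \le K$ and $\norm{\xx_n^*} \le K$ for all $n$; in particular $\XB$ is semi-normalized and M-bounded. Assume by contradiction that $\sup\enbrace{\norm{L_\psi} \colon \psi \in \Pi(\NN)} = \infty$, and pick an unbounded sequence $(R_k)_{k=1}^\infty$ in $(1,\infty)$. The key observation is that for every $\psi \in \Pi(\NN)$ and every finitely supported $f$, the value $L_\psi(f)$ depends only on the restriction of $\psi$ to $\supp(f)$, and that any injection from a finite set into $\NN$ extends to a permutation; so witnessing unboundedness of $\enbrace{L_\psi \colon \psi \in \Pi(\NN)}$ is the same as witnessing unboundedness of $\enbrace{T_\psi \colon \psi \in \Gamma_\infty(\Nt,\NN), \Nt \in \Pt_{<\infty}(\NN)}$ — where we may always arrange that both $\supp(f)$ and the image $\psi(\supp(f))$ sit inside a prescribed cofinite set.

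Next I would run a recursive construction, analogous to the one in Proposition~\ref{prop:uniform-spread}, producing $(f_k, \psi_k)_{k=1}^\infty$ with $f_k \in \spn(\XB)$, a permutation $\psi_k$, supports $\supp(f_k)$ pairwise disjoint and also disjoint from the images $\psi_j(\supp(f_j))$, $j \le k$, and the images themselves pairwise disjoint, such that
\[
\max\enbrace{\frac{\norm{L_{\psi_k}(f_k)}}{\norm{f_k}}, \frac{\norm{f_k}}{\norm{L_{\psi_k}(f_k)}}} \ge R_k K^{-2} \enpar{1+K^p}^{-1/p} \cdot (\text{something growing}),
\]
the extra $K$-factors being the price of later "cutting off" finitely many coordinates as in the earlier proof. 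The disjointness is arranged by, at stage $k$, first looking at the finite set $E_k$ of indices used so far (supports and images), choosing $\psi_k$ to be unbounded on vectors supported in $\NN \setminus E_k$ with image also in $\NN \setminus E_k$ — possible since $\NN \setminus E_k$ is infinite-coinfinite and Claim-style arguments reduce the general $\psi$ to one with range avoiding any prescribed cofinite set — and then selecting $f_k$ accordingly. Having done this, I would assemble a single map $\psi \colon \Nt \to \NN$ on $\Nt \coloneqq \bigcup_k \supp(f_k)$ by setting $\restr{\psi}{\supp(f_k)} = \restr{\psi_k}{\supp(f_k)}$; by construction the $\supp(f_k)$ are pairwise disjoint so $\psi$ is well-defined, and the disjointness of the images makes $\psi$ injective. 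Then by Proposition~\ref{prop:sym}(i), $\psi$ is a translation relative to $\XB$, so $T_\psi$ is a (bounded) isomorphic embedding; but $T_\psi(f_k) = L_{\psi_k}(f_k)$ for all $k$, and the growing ratios contradict $\max\enbrace{\norm{T_\psi}, \norm{T_\psi^{-1}}} < \infty$.

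The main obstacle I anticipate is the bookkeeping needed to keep the images $\psi_k(\supp(f_k))$ mutually disjoint and disjoint from all supports, \emph{while simultaneously} retaining enough room in $\NN$ to keep each individual $\psi_k$ genuinely unbounded — one must verify that restricting attention to permutations whose action on a chosen finite block lands in a cofinite set does not lower the supremum below infinity. This is exactly the content of the reduction packaged in the proof of Proposition~\ref{prop:sym} (Claims~\ref{cl:A} and \ref{cl:Z}): given any $\psi \in \Gamma_\infty(\Nt,\NN)$ one can post-compose with a translation to move its range inside any prescribed infinite-coinfinite set at no cost to the norm, so the sup over such "localized" maps is still infinite. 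Once that reduction is in hand, the rest is the same disjoint-blocks gluing argument used in Section~\ref{sect:Spreading}, with the $C$-symmetry bound re-emerging because the final $T_\psi$ is, restricted to each block, a permutation operator. A clean write-up would therefore: (1) state the localization reduction as a preliminary remark; (2) do the recursion; (3) glue and derive the contradiction.
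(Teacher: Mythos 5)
Your proposal is correct and follows essentially the same route as the paper's proof: assuming $\sup\{\norm{L_\psi}\colon\psi\in\Pi(\NN)\}=\infty$, recursively extract finitely supported witnesses with pairwise disjoint supports (pruning previously used coordinates at a cost controlled via Corollary~\ref{cor:SymBounded}), glue the partial maps into a single map, and contradict the boundedness of the associated operator --- the paper glues into a genuine permutation and contradicts the definition of symmetric basis directly, whereas you glue into an injection and invoke Proposition~\ref{prop:sym}(i), an immaterial difference (and your explicit bookkeeping of disjoint images is in fact the careful way to justify the gluing). The only imprecision is the phrase ``at no cost to the norm'' for relocating images by post-composition with a translation: there is a cost, namely a factor $\norm{T_\pi}$ or $\norm{T_\pi^{-1}}$, but since the set to be avoided is fixed before the witness pair is chosen this is a fixed finite factor at each stage (alternatively one can bound the modification by $\abs{E_k}^{1/p}K^2$ using M-boundedness, as in the paper's pruning step), so your conclusion that the localized supremum remains infinite stands.
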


\begin{proof}
Assume that $\XX$ is a $p$-Banach space, $0<p\le 1$. By Corollary~\ref{cor:SymBounded}, there is $C\in[1,\infty)$ such that
\[
\norm{L}_{\psi}\le C, \quad \psi\in \bigcup_{n=1}^\infty \Gamma\enpar{\enbrace{n},\NN}.
\]

Assume by contradiction that 
\[
\sup\enbrace{ \norm{L_\psi} \colon \psi\in\Pi(\NN)}=\infty.
\]
 Pick an unbounded sequence $(R_k)_{k=1}^\infty$ in $(1,\infty)$. We recursively construct $(f_k,\psi_k)_{k=1}^\infty$ in $\spn(\XB)\times \Pi(\NN)$ as follows. Pick $j\in\NN$  and assume that if $j\ge 2$ $(f_k,\psi_k)_{k=1}^{j-1}$ are constructed. Set 
\[
A_j=\bigcup_{k=1}^{j-1} \supp(f_k), \quad m_j=\abs{A_j}.
\]
Choose $f_j\in\spn(\XB)$ and $\psi_j\in\Pi(\NN)$ such that
\[
\norm{f_j}=m_j, \quad \norm{L_{\psi_j}(f_j)} > C R_j m_j.
\]

Set $g_k=f_k-S_{A_k}(f_k)$ and $B_k=\supp(g_k)$ for all $k\in\NN$. The sequence $(B_k)_{k=1}^\infty$ is pairwise disjoint in $\Pt_{<\infty}(\NN)$, so
there is $\psi\in\Pi(\NN)$ such that $\restr{\psi}{B_k}=\restr{\psi_k}{B_k}$ for all $k\in\NN$. Since
\[
\norm{g_k}^p \le m_k^p + C^p m_k^p, \quad \norm{L_{\psi_k} (g_k)}^p> C^p R_k^p m_k^p-C^p m_k^p,
\]
and $L_{\psi}(g_k)=L_{\psi_k}(g_k)$ for all $k\in\NN$,
\[
\norm{L_{\psi}(g_k)} > \frac{C}{\enpar{1+C^p}^{1/p}} \enpar{R_k^p-1}^{1/p} \norm{g_k}, \quad k\in\NN.
\]
Therefore, $\psi$ is not a shift for $\XB$. This absurdity shows that $(L_\psi)_{\psi\in\Pi(\NN)}$ is uniformly bounded.
\end{proof}

\begin{theorem}\label{thm:IsoSym}
Let $0<p\le 1$ and $\XB$ be a symmetric basis of a locally $p$-convex quasi-Banach space $\XX$. Then, there is an equivalent $p$-norm for $\XX$ so that $L_\psi$ is an isometry for all $\psi\in\Pi(\NN)$.
\end{theorem}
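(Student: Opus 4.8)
The plan is to derive the theorem from Lemma~\ref{lem:SG}, feeding it with the group of shifting operators $\Gt_m\coloneqq\enbrace{L_\psi \colon \psi\in\Pi(\NN)}$, exactly as in the classical symmetric Schauder case. The only genuinely new ingredient needed is the uniform boundedness of this family, and that has already been established in Proposition~\ref{prop:SymUniform}; everything else is formal.

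First I would check that $\Gt_m$ is a semigroup of endomorphisms of $\XX$. Each $L_\psi$ is a bounded operator $\XX\to\XX$ because $\psi\in\Pi(\NN)$ is a shift relative to the symmetric basis $\XB$; since $\Pi(\NN)$ is a group under composition and $L_{\psi\circ\varphi}=L_\psi\circ L_\varphi$ for shifts, $\Gt_m$ is closed under composition. In fact $\Gt_m$ is a group: it contains $\Id_\XX=L_{\mathrm{id}_\NN}$, and $(L_\psi)^{-1}=L_{\psi^{-1}}\in\Gt_m$. Next, Proposition~\ref{prop:SymUniform} provides $C\in[1,\infty)$ with $\norm{L_\psi}\le C$ for all $\psi\in\Pi(\NN)$, so $\Gt_m$ is a uniformly bounded semigroup of endomorphisms of the locally $p$-convex space $\XX$. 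Lemma~\ref{lem:SG} then yields an equivalent $p$-norm $\Nnorm{\cdot}$ on $\XX$ with $\Nnorm{L_\psi}\le 1$ for every $\psi\in\Pi(\NN)$.

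Finally I would upgrade "contraction'' to "isometry'' using that $\Gt_m$ is a group: for any $f\in\XX$ and $\psi\in\Pi(\NN)$,
\[
\Nnorm{L_\psi(f)}\le\Nnorm{f}=\Nnorm{L_{\psi^{-1}}\enpar{L_\psi(f)}}\le\Nnorm{L_\psi(f)},
\]
so $\Nnorm{L_\psi(f)}=\Nnorm{f}$; hence each $L_\psi$ is a (surjective) linear isometry of $\enpar{\XX,\Nnorm{\cdot}}$, which is the assertion. As for the main obstacle: at this point there is essentially none — the substantive work is already done in Proposition~\ref{prop:sym} (a symmetric basis is spreading, hence all the structural consequences of Section~\ref{sect:Spreading} apply) and in Proposition~\ref{prop:SymUniform} (the shifting operators attached to permutations are uniformly bounded). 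Granting those, the renorming is immediate; the single point deserving a line of care is that the two-sided estimate coming from the group structure converts the contractions furnished by Lemma~\ref{lem:SG} into honest isometries, as in the displayed computation above.
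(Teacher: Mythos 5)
Your proposal is correct and follows exactly the paper's argument: the paper's proof of Theorem~\ref{thm:IsoSym} is precisely to combine Proposition~\ref{prop:SymUniform} with Lemma~\ref{lem:SG} applied to the group $\Gt_m$, with the upgrade from contraction to isometry via the group structure (your displayed two-sided estimate) being the same device the paper uses implicitly when it speaks of ``the group $\Gt_m$''. Your write-up simply makes that last step explicit, which is a fine, faithful elaboration rather than a different route.
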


\begin{proof}
Just combine Proposition~\ref{prop:SymUniform} with Lemma~\ref{lem:SG} applied to the group $\Gt_m$.
\end{proof}

Theorem~\ref{thm:IsoSym} complements the study of conditional symmetric bases carried out in \cite{AABCO2024}, where the authors dealt with semi-normalized M-bounded bases of Banach spaces and took for granted that symmetric bases admit isometric renormings. In hindsight, we can now state the following result.

\begin{theorem}[cf.\@ \cite{AABCO2024}*{Corollary 3.8}]\label{thm:AAOB}
Given a symmetric basis $\XB$ of a Banach space $\XX$, the following are equivalent.
\begin{enumerate}[label=(\roman*), leftmargin=*]
    \item $\XB$ is an unconditional basis.
    \item $\XB$ is bidemocratic.
    \item $\XB$ is unconditional for constant coefficients.
    \item $\XB$ is a Markushevich basis.
\end{enumerate}
\end{theorem}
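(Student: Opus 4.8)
The approach I would take is to reduce the statement to \cite{AABCO2024}*{Corollary 3.8}, whose proof rests on two facts that were either imposed as standing hypotheses there or used without a self-contained justification: that a symmetric basis is semi-normalized and M-bounded, and that a Banach space with a symmetric basis can be renormed so that every $L_\psi$, $\psi\in\Pi(\NN)$, becomes an isometry. Both are now available in full generality — the former is Corollary~\ref{cor:SymBounded} (through Proposition~\ref{prop:sym}), and the latter is Theorem~\ref{thm:IsoSym} — so the real content of the present argument is the observation that, once these are in hand, \cite{AABCO2024}*{Corollary 3.8} applies to an arbitrary symmetric basis with no ancillary assumption left to check.

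Concretely, the first step is to invoke Proposition~\ref{prop:sym} and Corollary~\ref{cor:SymBounded} to record that $\XB$, being a (necessarily non-constant) symmetric basis, is a semi-normalized, M-bounded, complete minimal system of $\XX$. The second step is to replace the norm of $\XX$ by the equivalent one furnished by Theorem~\ref{thm:IsoSym}, relative to which $L_\psi$ is an isometry for every $\psi\in\Pi(\NN)$; this is precisely the isometric model taken for granted in \cite{AABCO2024}. The third step is then citational: once $\XB$ is a semi-normalized M-bounded isometrically symmetric basis, the implications $\mathrm{(ii)}\Rightarrow\mathrm{(i)}$, $\mathrm{(iii)}\Rightarrow\mathrm{(i)}$, $\mathrm{(iv)}\Rightarrow\mathrm{(i)}$ and the mutual equivalences of $\mathrm{(ii)}$, $\mathrm{(iii)}$, $\mathrm{(iv)}$ are exactly \cite{AABCO2024}*{Corollary 3.8}.

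For orientation it is worth noting that the converse implications are elementary and do not depend on that reference. $\mathrm{(i)}\Rightarrow\mathrm{(iv)}$ holds because an unconditional basis is by definition a Schauder basis, and every Schauder basis is a Markushevich basis. $\mathrm{(i)}\Rightarrow\mathrm{(iii)}$ holds because unconditionality gives a uniform bound for $\enbrace{M_\lambda\colon\lambda\in c_{00}\cap B_{\ell_\infty}}$, which specialized to sign vectors supported on finite sets is unconditionality for constant coefficients. $\mathrm{(i)}\Rightarrow\mathrm{(ii)}$ follows by pairing $\sum_{k\in A}\xx_k$ against $\sum_{k\in A}\xx_k^*$: after the renorming of Theorem~\ref{thm:IsoSym} the fundamental function $n\mapsto\norm{\sum_{k\in A}\xx_k}$ (for any $\abs{A}=n$) is well defined, the dual basis $\XB^*$ is again symmetric and, being dual to an unconditional basis, unconditional, so its fundamental function is likewise well defined; the inequality $n\le\norm{\sum_{k\in A}\xx_k}\,\norm{\sum_{k\in A}\xx_k^*}$ is automatic, and the matching upper bound up to a constant is the classical averaging-over-permutations computation for symmetric unconditional bases, which is bidemocracy.

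The main obstacle has in fact already been surmounted: it is the whole build-up of Section~\ref{sect:Spreading} and the present section — Propositions~\ref{prop:sym} and \ref{prop:SymUniform}, Corollary~\ref{cor:SymBounded}, and Theorem~\ref{thm:IsoSym} via Lemma~\ref{lem:SG} — that lifts the semi-normalization, M-boundedness and Markushevich hypotheses under which \cite{AABCO2024}*{Corollary 3.8} was originally proved. The only subtlety that still deserves attention is that $\XB$ is a priori merely a complete minimal system, so condition $\mathrm{(iv)}$ can genuinely fail (cf.\@ \cite{AABCO2024}*{Example 3.9}); what places it on equal footing with the others is that $\mathrm{(i)}$ forces totality while, among semi-normalized M-bounded symmetric bases, failure of totality is incompatible with each of $\mathrm{(ii)}$ and $\mathrm{(iii)}$ — which is part of what \cite{AABCO2024}*{Corollary 3.8} delivers.
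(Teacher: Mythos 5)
Your proposal is correct and matches the paper's intent exactly: the paper offers no separate argument for Theorem~\ref{thm:AAOB}, stating it ``in hindsight'' as an immediate consequence of Proposition~\ref{prop:sym}, Corollary~\ref{cor:SymBounded} and Theorem~\ref{thm:IsoSym} combined with \cite{AABCO2024}*{Corollary 3.8}, which is precisely your reduction. Your added sketches of the elementary implications and the totality caveat are sound and consistent with that reference, so nothing is missing.
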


Recall that a complete minimal system $\XB=(\xx_n)_{n=1}^\infty$ is said to be \emph{unconditional for constant coefficients} if there is a constant $C$ such that
\[
\norm{\sum_{n\in A} \varepsilon_n\, \xx_n} \le C \norm{\sum_{n\in A} \varepsilon_n\,\xx_n}
\]
for all $B\in\Pt_{<\infty}(\NN)$, all $A\subset B$, and all
$(\varepsilon_n)_{n\in B}$ in $S_\FF$. Let $(\xx_n^*)_{n=1}^\infty$ be the coordinate functionals of $\XB$. We say that $\XB$ is \emph{bidemocratic} if there is a constant $D$ such that 
\[
\norm{\sum_{n\in A} \varepsilon_n\, \xx_n} \norm{\sum_{n\in B} \delta_n\, \xx_n} \le D \max\enbrace{\abs{A}, \abs{B}}
\]
for all $A$, $B\in\Pt_{<\infty}(\NN)$, and all $(\varepsilon_n)_{n\in A}$, $(\delta_n)_{n\in B}$ in $S_\FF$.

We point out that Theorem~\ref{thm:AAOB} breaks down in the locally non-convex setting. Indeed, the authors of \cite{ABBA2025} constructed a quasi-Banach space $\XX$ with a conditional symmetric Markushevich basis. This space $\XX$ is locally $p$-convex for all $0<p<1$, and its canonical basis is almost greedy, then unconditional for constant coefficients. We remark that this example from  \cite{ABBA2025} does not rule out possible relations between totality and weakened forms of unconditionality, such as quasi-greediness or unconditionality for constant coefficients, in the general framework of quasi-Banach spaces. The summing basis of $c_0$ witnesses that the spreading counterpart of Theorem~\ref {thm:AAOB} does not hold. In contrast, it is unclear to the authors whether a spreading Markushevich basis that fails to be a Schauder basis exists.
\section*{Statements and Declarations}
\subsection*{Conflict of interest} The authors have no competing interests to declare that are relevant to the content of this article.

\subsection*{Data Availability} Data sharing does not apply to this article as no datasets were generated or analysed during the current study.
\begin{bibdiv}
\begin{biblist}

\bib{AABW2021}{article}{
      author={Albiac, Fernando},
      author={Ansorena, Jos\'{e}~L.},
      author={Bern\'{a}, Pablo~M.},
      author={Wojtaszczyk, Przemys{\l}aw},
       title={Greedy approximation for biorthogonal systems in quasi-{B}anach spaces},
        date={2021},
     journal={Dissertationes Math. (Rozprawy Mat.)},
      volume={560},
       pages={1\ndash 88},
}

\bib{AABCO2024}{article}{
      author={Albiac, Fernando},
      author={Ansorena, Jos\'{e}~L.},
      author={Blasco, \'{O}scar},
      author={Chu, H\`{u}ng~Viet},
      author={Oikhberg, Timur},
       title={Counterexamples in isometric theory of symmetric and greedy bases},
        date={2024},
        ISSN={0021-9045,1096-0430},
     journal={J. Approx. Theory},
      volume={297},
       pages={Paper No. 105970, 20},
         url={https://doi.org/10.1016/j.jat.2023.105970},
      review={\MR{4650744}},
}

\bib{ABBA2025}{article}{
      author={Albiac, Fernando},
      author={Ansorena, José~L.},
      author={Berasategui, Miguel},
      author={Berná, Pablo~M.},
       title={When greedy approximation breaks: Counterexamples in quasi-{B}anach spaces},
        date={2025},
     journal={arXiv e-prints},
      eprint={2510.13693},
         url={https://arxiv.org/abs/2510.13693},
}

\bib{AlbiacKalton2016}{book}{
      author={Albiac, Fernando},
      author={Kalton, Nigel~J.},
       title={Topics in {B}anach space theory},
     edition={Second Edition},
      series={Graduate Texts in Mathematics},
   publisher={Springer, [Cham]},
        date={2016},
      volume={233},
        ISBN={978-3-319-31555-3; 978-3-319-31557-7},
         url={https://doi.org/10.1007/978-3-319-31557-7},
        note={With a foreword by Gilles Godefroy},
      review={\MR{3526021}},
}

\bib{Ansorena2018}{article}{
      author={Ansorena, Jos\'{e}~L.},
       title={A note on subsymmetric renormings of {B}anach spaces},
        date={2018},
        ISSN={1607-3606},
     journal={Quaest. Math.},
      volume={41},
      number={5},
       pages={615\ndash 628},
         url={https://doi-org/10.2989/16073606.2017.1393704},
      review={\MR{3836410}},
}

\bib{Aoki1942}{article}{
      author={Aoki, Tosio},
       title={Locally bounded linear topological spaces},
        date={1942},
        ISSN={0369-9846},
     journal={Proc. Imp. Acad. Tokyo},
      volume={18},
       pages={588\ndash 594},
         url={http://projecteuclid.org/euclid.pja/1195573733},
      review={\MR{14182}},
}

\bib{BL1983}{book}{
      author={Beauzamy, B.},
      author={Laprest\'{e}, J.-T.},
       title={Mod\`{e}les \'{e}tal\'{e}s des espaces de {B}anach},
      series={Travaux en Cours. [Works in Progress]},
   publisher={Hermann, Paris},
        date={1984},
        ISBN={2-7056-5965-X},
      review={\MR{770062}},
}

\bib{Day1940}{article}{
      author={Day, M.~M.},
       title={The spaces {$L^p$} with {$0<p<1$}},
        date={1940},
        ISSN={0002-9904},
     journal={Bull. Amer. Math. Soc.},
      volume={46},
       pages={816\ndash 823},
         url={https://doi-org/10.1090/S0002-9904-1940-07308-2},
      review={\MR{2700}},
}

\bib{Enflo1973}{article}{
      author={Enflo, Per},
       title={A counterexample to the approximation problem in {B}anach spaces},
        date={1973},
        ISSN={0001-5962},
     journal={Acta Math.},
      volume={130},
       pages={309\ndash 317},
         url={https://doi-org/10.1007/BF02392270},
      review={\MR{402468}},
}

\bib{Garling1968}{article}{
      author={Garling, D. J.~H.},
       title={Symmetric bases of locally convex spaces},
        date={1968},
        ISSN={0039-3223},
     journal={Studia Math.},
      volume={30},
       pages={163\ndash 181},
      review={\MR{0230105}},
}

\bib{HMVZ2008}{book}{
      author={H\'{a}jek, Petr},
      author={Montesinos~Santaluc\'{\i}a, Vicente},
      author={Vanderwerff, Jon},
      author={Zizler, V\'{a}clav},
       title={Biorthogonal systems in {B}anach spaces},
      series={CMS Books in Mathematics/Ouvrages de Math\'{e}matiques de la SMC},
   publisher={Springer, New York},
        date={2008},
      volume={26},
        ISBN={978-0-387-68914-2},
      review={\MR{2359536}},
}

\bib{James1951}{article}{
      author={James, Robert~C.},
       title={A non-reflexive {B}anach space isometric with its second conjugate space},
        date={1951},
        ISSN={0027-8424},
     journal={Proc. Nat. Acad. Sci. U.S.A.},
      volume={37},
       pages={174\ndash 177},
         url={https://doi.org/10.1073/pnas.37.3.174},
      review={\MR{44024}},
}

\bib{LinPel1968}{article}{
      author={Lindenstrauss, Joram},
      author={Pe{\l}czy\'{n}ski, Aleksander},
       title={Absolutely summing operators in {$L_{p}$}-spaces and their applications},
        date={1968},
        ISSN={0039-3223},
     journal={Studia Math.},
      volume={29},
       pages={275\ndash 326},
         url={https://doi-org/10.4064/sm-29-3-275-326},
      review={\MR{0231188}},
}

\bib{LinTza1977}{book}{
      author={Lindenstrauss, Joram},
      author={Tzafriri, Lior},
       title={Classical {B}anach spaces. {I} -- sequence spaces},
      series={Ergebnisse der Mathematik und ihrer Grenzgebiete [Results in Mathematics and Related Areas]},
   publisher={Springer-Verlag, Berlin-New York},
        date={1977},
        ISBN={3-540-08072-4},
      review={\MR{0500056}},
}

\bib{Markusevich1943}{article}{
      author={Markouchevitch, A.},
       title={Sur les bases (au sens large) dans les espaces lin\'eaires},
        date={1943},
     journal={C. R. (Doklady) Acad. Sci. URSS (N.S.)},
      volume={41},
       pages={227\ndash 229},
      review={\MR{10778}},
}

\bib{Pel1976}{article}{
      author={Pe{\l}czy\'nski, A.},
       title={All separable {B}anach spaces admit for every {$\varepsilon >0$}\ fundamental total and bounded by {$1+\varepsilon $} biorthogonal sequences},
        date={1976},
        ISSN={0039-3223,1730-6337},
     journal={Studia Math.},
      volume={55},
      number={3},
       pages={295\ndash 304},
         url={https://doi.org/10.4064/sm-55-3-295-304},
      review={\MR{425587}},
}

\bib{Plic77}{article}{
      author={Pli\v{c}ko, A.~N.},
       title={{$M$}-bases in separable and reflexive {B}anach spaces},
        date={1977},
        ISSN={0041-6053},
     journal={Ukrain. Mat. \v Z.},
      volume={29},
      number={5},
       pages={681\ndash 685, 711},
      review={\MR{458133}},
}

\bib{Rolewicz1957}{article}{
      author={Rolewicz, Stefan},
       title={On a certain class of linear metric spaces},
        date={1957},
     journal={Bull. Acad. Polon. Sci. Cl. III.},
      volume={5},
       pages={471\ndash 473, XL},
      review={\MR{0088682}},
}

\bib{Singer1961}{article}{
      author={Singer, Ivan},
       title={On {B}anach spaces with symmetric basis},
        date={1961},
     journal={Rev. Math. Pures Appl.},
      volume={6},
       pages={159\ndash 166},
      review={\MR{0146632}},
}

\end{biblist}
\end{bibdiv}
\end{document}